\documentclass[a4paper,12pt]{article}

\usepackage{amsthm}
\usepackage{latexsym}
\usepackage{dsfont}
\usepackage{bbm}
\usepackage{amssymb}
\usepackage{amsmath}
\usepackage{graphicx}

\numberwithin{equation}{section}

\theoremstyle{plain}
\newtheorem{theorem}{Theorem}[section]
\newtheorem{lemma}[theorem]{Lemma}
\newtheorem{corollary}[theorem]{Corollary}
\newtheorem{proposition}[theorem]{Proposition}
\theoremstyle{remark}

\theoremstyle{definition}
\newtheorem{example}[theorem]{Example}

\DeclareMathOperator{\N}{\mathbb{N}}
\DeclareMathOperator{\Z}{\mathbb{Z}}

\DeclareMathOperator{\R}{\mathbb{R}}

\DeclareMathOperator{\V}{\mathbb{V}}

\DeclareMathOperator{\Prob}{\mathbb{P}}

\DeclareMathOperator{\E}{\mathbb{E}}
\DeclareMathOperator{\G}{\mathbb{G}}
\DeclareMathOperator{\A}{\mathcal{A}}

\DeclareMathOperator{\1}{\mathbf{1}}

\DeclareMathOperator{\bT}{\mathbf{T}}
\DeclareMathOperator{\bC}{\mathbf{C}}
\DeclareMathOperator{\bX}{\mathbf{X}}

\DeclareMathOperator{\aT}{_{\mathit{a}}\!\mathit{T}}
\DeclareMathOperator{\aTi}{_{\mathit{a}}\!\mathit{T}_{\mathit{i}}}
\DeclareMathOperator{\am}{_{\mathit{a}}\!\mathit{m}}
\DeclareMathOperator{\aL}{_{\mathit{a}}\!\mathit{L}}
\DeclareMathOperator{\aS}{_{\mathit{a}}\!\mathit{S}}
\DeclareMathOperator{\aN}{_{\mathit{a}}\mathit{N}}
\DeclareMathOperator{\aW}{_{\mathit{a}}\!\mathit{W}}
\DeclareMathOperator{\Fsum}{\mathfrak{F}_{\Sigma}}

\begin{document}

\thispagestyle{empty}
\title{Fixed points of inhomogeneous smoothing transforms}
\author{	Gerold Alsmeyer\footnote{ Gerold Alsmeyer,
        Institut f\"ur Mathematische Statistik,
        Universit\"at M\"unster,
        Einsteinstra\ss e 62,
        DE-48149 M\"unster,
        Germany}		\and
        Matthias Meiners\footnote{Corresponding author: Matthias Meiners,
        Matematiska institutionen,
        Uppsala universitet,
        Box 480,
        751 06 Uppsala, Sweden.
        Email: matthias.meiners@math.uu.se.
        Research supported by DFG-grant Me 3625/1-1}}

\maketitle

\begin{abstract}
We consider the inhomogeneous version of the fixed-point equation of the smoothing transformation, that is, the equation $X \stackrel{d}{=} C + \sum_{i \geq 1} T_i X_i$, where $\stackrel{d}{=}$ means equality in distribution, $(C,T_1,T_2,\ldots)$ is a given sequence of non-negative random variables and $X_1,X_2,\ldots$ is a sequence of i.i.d.\ copies of the non-negative random variable $X$ independent of $(C,T_1,T_2,\ldots)$. In this situation, $X$ (or, more precisely, the distribution of $X$) is said to be a fixed point of the (inhomogeneous) smoothing transform. In the present paper, we give a necessary and sufficient condition for the existence of a fixed point. Further, we establish an explicit one-to-one correspondence with the solutions to the corresponding homogeneous equation with $C=0$. Using this correspondence and the known theory on the homogeneous equation, we present a full characterization of the set of fixed points under mild assumptions.
\end{abstract}
\vspace{0,1cm}

\noindent
\emph{Keywords:} branching random walk; fixed point; multiplicative martingales; smoothing transform; stochastic fixed-point equation; weighted branching process

\noindent
2010 Mathematics Subject Classification:
Primary: 		60E05	\\			
\hphantom{2010 Mathematics Subject Classification:}
Secondary:	39B22,				
				60J80				

\section{Introduction} \label{sec:Intro}

For a given sequence $(C,T) = (C,T_1,T_2,\ldots)$ of non-negative random variables we are interested in the solutions to the stochastic fixed-point equation
\begin{equation}	\label{eq:SumFP}
X \stackrel{d}{=} C + \sum_{i \geq 1} T_i X_i,
\end{equation}
where $\stackrel{d}{=}$ means equality in distribution and $X_1,X_2,\ldots$ is a sequence of i.i.d.\ copies of the non-negative random variable $X$. The sequence $X_1, X_2, \ldots$ is assumed to be independent of $(C,T)$. Any probability distribution $P$ on $[0,\infty)$ such that \eqref{eq:SumFP} holds with $X\stackrel{d}{=}P$ is called a fixed point of the smoothing transform (corresponding to $(C,T)$). This notion (in the homogeneous case) was coined by Durrett and Liggett, see \cite{DL1983}. Let $\Fsum(C,T)$ denote the set of these fixed points. In slight abuse of terminology, we will also refer to a random variable $X$ as an element of $\Fsum(C,T)$ if this is actually true for $\Prob(X \in \cdot)$.

We continue with some examples in which equations of the form \eqref{eq:SumFP} occur.

\begin{example}[Total population of a Galton-Watson process]	\label{Exa:total_population_(sub-)critical_GWP}
Let $(Z_n)_{n \geq 0}$ be a subcritical or non-trivial critical Galton-Wat\-son process with a single ancestor and total population size $X := \sum_{n \geq 0} Z_n$. Then $X$ is almost surely finite and satisfies
\begin{equation}	\label{eq:total_population_(sub-)critical_GWP}
X \stackrel{d}{=} 1 + \sum_{i \geq 1} \1_{\{Z_1 \geq i\}} X_i,
\end{equation}
where $X_{i}$ denotes the total size of the subpopulation stemming from the $i$th individual of the first generation. Since $X_{1},X_{2},\ldots$ are i.i.d.\ copies of $X$ and independent of $Z_{1}$, we see that $X$ forms indeed a solution to \eqref{eq:SumFP}
with 
\begin{equation*}
(C,T_1,T_2,\ldots) = (1,\1_{\{Z_1 \geq 1\}},\1_{\{Z_1 \geq 2\}},\ldots).
\end{equation*}
\end{example}

\begin{example}[Busy period in the M/G/1 queue]	\label{Exa:M/G/1}
Consider the M/G/1-queue with Poisson arrival process $(N(t))_{t \geq 0}$ with intensity $\lambda>0$ and i.i.d.\ service times $U_0,U_1,...$ having finite positive mean $\mu$. It is well-known that if the traffic intensity $\rho =\lambda \mu$ is less than or equal to 1, then
busy and idle periods alternate, see \textit{e.g.}\ \cite[Theorem 3 on p.\;58]{Tak1962},  \cite[Section XIV.4]{Fel1971}, or \cite[p.\;64]{AN1972}. In particular, the duration $X$ of a busy period is almost surely finite. Further, when assuming that the customer arriving at time 0 finds the server idle, $X$ satisfies the distributional equation
\begin{equation}	\label{eq:M/G/1}
X \stackrel{d}{=} U_0 + \sum_{i=1}^{N(U_0)} X_i,
\end{equation}
where $X_1, X_2, \ldots$ are i.i.d\ copies of $X$ and independent of $(N(t))_{t \geq 0}$ and $(U_n)_{n \geq 0}$, see \textit{e.g.}\ \cite[Theorem 4 on p.\;60]{Tak1962}.
\end{example}

\begin{example}[PageRank]	\label{Exa:PageRank}
In a recent publication, Volkovich and Litvak \cite{VL2010}, in their analysis of Google's \texttt{PageRank} algorithm, are led to an equation of type \eqref{eq:SumFP} for the Page\-Rank $X$, a certain importance measure for a randomly chosen web site. Roughly speaking, viewing the World Wide Web as an oriented graph, where nodes are pages and edges are links, the PageRank is defined as the stationary distribution of an ``easily bored'' surfer who either moves at random (with some probability $c$) from a current site to a neighboring one along an outging edge, or makes a \emph{teleportation jump} (with probability $1-c$), which means that he starts afresh by picking any node of the graph in accordance with some distribution, called \emph{teleportation distribution}. However, unlike our work, the authors focus on the tail behaviour of $X$. For further details as well as further references, the reader is referred to \cite{PB1998} and also to \cite{VL2010}.
\end{example}

If $C=0$ we obtain the homogeneous version of Eq.\ \eqref{eq:SumFP}, viz.
\begin{equation}	\label{eq:SumFP_hom}
X \stackrel{d}{=} \sum_{i \geq 1} T_i X_i.
\end{equation}
Both equations \eqref{eq:SumFP} and \eqref{eq:SumFP_hom} lead to functional equations when stated in terms of Laplace transforms. Indeed, \eqref{eq:SumFP} holds iff the Laplace transform $\psi$ of $X$ satisfies
\begin{equation}	\label{eq:SumFE}
\psi(t) = \E e^{-tC} \, \prod_{i \geq 1} \psi(T_i t)	\quad	(t \geq 0),
\end{equation}
while \eqref{eq:SumFP_hom} is equivalent to
\begin{equation}	\label{eq:SumFE_hom}
\psi(t) = \E \prod_{i \geq 1} \psi(T_i t)	\quad	(t \geq 0).
\end{equation}
In slight abuse of terminology, we henceforth write $\psi \in \Fsum(C,T)$ for a Laplace transform $\psi$ when we mean that the distribution pertaining to $\psi$ is an element of $\Fsum(C,T)$.

Eq.\ \eqref{eq:SumFE_hom} has been studied in great detail in the literature, the most recent reference being \cite{ABM2010}, where a characterization of the set of monotone solutions to the functional equation is presented with only very weak assumptions on the sequence $T_1,T_2,\ldots$ being in force. For earlier contributions see \cite{Big1977,DL1983,BK1997,Liu1998} to name but a few. A more comprehensive overview of the existing literature on the homogeneous equation can be found in \cite{ABM2010}. Work on the inhomogeneous equation started only quite recently. \cite{JO2010} and \cite{VL2010} study \eqref{eq:SumFP} under more restrictive assumptions on the sequence $(C, T_1, T_2, \ldots)$, whereas \cite{JO2010b} deals with \eqref{eq:SumFP} in essentially the same generality as here. All three references focus on the tail behavior of $X$ under varying conditions on $(C,T_1,T_2,\ldots)$, while the goal of the present paper is rather to determine the set of solutions to \eqref{eq:SumFP}. Using a similar approach, this has also been done by Spitzmann in his PhD thesis \cite{Spi2010}, which includes a chapter on determining the solutions to \eqref{eq:SumFP} via a look at the associated functional equation \eqref{eq:SumFE}. Moreover, Spitzmann studies the functional equation in the more general case where $\psi$ is a non-negative decreasing function and the inhomogeneity factor $e^{-Ct}$ is replaced by one of the more general form $g(tC)$ for some decreasing function $g$. This allows him to cover the related $\min$-type equation
\begin{equation}	\label{eq:MaxFP}
X	\stackrel{d}{=}	\inf\{X_i/T_i: i \geq 1, T_i > 0\} \wedge C
\end{equation}
and the related $\max$-type equation as well. On the other hand, this greater generality is at the cost of clarity which is why we decided to confine ourselves to the sum-type equation. Most of the arguments given here could be modified, however, to cover the more general functional equation as well.

The paper is organised as follows. The next section is devoted to an introduction of a weighted branching model which allows the explicit iteration of equations \eqref{eq:SumFP} and \eqref{eq:SumFP_hom} on a distinguished probability space. In Section \ref{sec:Fsum}, we define a ``minimal solution'' $W^*$ (see Eq.\ \eqref{eq:W*}), which is a function of the weighted branching process. It turns out that $\Fsum(C,T) \not= \emptyset$ if and only if $W^*$ is almost surely finite in which case $W^*$ forms indeed a solution (Theorem \ref{Thm:Characterization_of_Fsum_neq_empty}). Using this result as a starting point, two questions must be addressed. The first one is to investigate when $W^*$ is almost surely finite. The second one is to describe the set of solutions in the case when $W^*$ is almost surely finite. In Section \ref{sec:Disintegration}, we provide a technique, called \emph{disintegration}, that is useful in this endeavour. Disintegration particularly leads to an explicit one-to-one correspondence between the solutions of \eqref{eq:SumFP} and its homogeneous counterpart \eqref{eq:SumFP_hom} (Theorem \ref{Thm:M=exp(-W*)xM_{hom}}). Section \ref{sec:known_and_simple} treats simple and known cases that are excluded from the subsequent analysis. Sections \ref{sec:nec_conditions} and \ref{sec:suf_conditions} provide necessary and sufficient conditions, respectively, for $W^*<\infty$ almost surely. The main results are Theorem \ref{Thm:Liu_better} and Theorem \ref{Thm:suf_conditions}. Section \ref{sec:description} provides a description of $\Fsum(C,T)$ in the remaining cases under mild conditions (Theorem \ref{Thm:characterization_of_Fsum}). In Section \ref{sec:Applications} we will discuss some applications and particularly return to the examples given above. The final section contains some concluding remarks.

\section{Iterating the fixed-point equation: weighted branching}	\label{sec:WBM}

To deploy iteration in the study of a functional equation is a natural tool which, in the case of Eqs \eqref{eq:SumFP} and \eqref{eq:SumFP_hom}, leads to a weighted branching model.

Let $\V := \bigcup_{n \in \N_0} \N^n$ be the infinite Ulam-Harris tree, where $\N := \{1,2,\ldots\}$, $\N_0 = \N \cup\{0\}$, and $\N^0 = \{\varnothing\}$ is the set containing the empty tuple only. Abbreviate $v = (v_1,\ldots,v_n)$ by $v_1 \ldots v_n$ and write $v|k$ for the restriction of $v$ to the first $k$ entries, that is, $v|k := v_1 \ldots v_k$, $k \leq n$. If $k > n$, put $v|k := v$. Write $vw$ for the vertex $v_1 \ldots v_n w_1 \ldots w_m$ where $w = w_1 \ldots w_m$. In this situation, we say that $v$ is an ancestor of $vw$. The length of a node $v$ is denoted by $|v|$, thus $|v|=n$ iff $v\in\N^{n}$. Next, let $\bC\otimes\bT := ((C(v),T(v)))_{v \in \V}$ be a family of i.i.d.\ copies of $(C,T)$, where $(C(\varnothing),T(\varnothing))=(C,T)$. We refer to $(C,T)=(C,T_1,T_2,\ldots)$ as the \emph{basic sequence (of the weighted branching model)} and interpret $C(v)$ as a weight attached to the vertex $v$ and $T_i(v)$ as a weight attached to the edge $(v,vi)$ in the infinite tree $\V$. Then define $L(\varnothing) := 1$ and, recursively,
\begin{equation}	\label{eq:L(v)}
L(vi):= L(v) T_i(v)
\end{equation}
for $v \in \V$ and $i \in \N$. We interpret $L(v)$ as the total multiplicative weight of the unique path from the root $\varnothing$ to $v$. We can transform the setting into an additive rather than a multiplicative by defining $S(v) := -\log L(v)$, $v \in \V$ (where $-\log 0 := \infty$). The points $S(v)$ with $S(v) < \infty$ define a classical branching random walk, see \textit{e.g.}\ \cite{Big1977,Big1998}. For $n\in\N_0$, let $\A_n$ denote the $\sigma$-algebra generated by the $(C(v),T(v))$, $|v|<n$. Put also $\A_{\infty}:=\sigma(\A_n:n \geq 0)$ $=\sigma(\bC\otimes\bT)$.

Further, we assume the existence of a family $\bX = (X(v))_{v \in \V}$ of i.i.d.\ copies of $X$, independent of $\bC \otimes \bT$. Then, by construction, $n$fold iteration of \eqref{eq:SumFP} can be expressed in terms of the weighted branching model:
\begin{equation}	\label{eq:SumFP_iterated}
X \stackrel{d}{=} \sum_{|u|<n} L(u)C(u) + \sum_{|v|=n} L(v) X(v).
\end{equation}
In the homogeneous case, the first sum on the right-hand side vanishes and \eqref{eq:SumFP_iterated} becomes
\begin{equation}	\label{eq:SumFP_hom_iterated}
X	\stackrel{d}{=}	\sum_{|v|=n} L(v) X(v).
\end{equation}

Let us finally introduce the shift operators $[\cdot]_u$, $u \in \V$. Given any function $\Psi=\psi(\bC \otimes \bT)$ of the weight family $\bC \otimes \bT$ pertaining to $\V$, define
\begin{equation*}
[\Psi]_u := \psi((T(uv))_{v \in \V})
\end{equation*}
to be the very same function but for the weight ensemble pertaining to the subtree rooted at $u \in \V$.
Any branch weight $L(v)$ can be viewed as such a function, and we thus have
$[L(v)]_u = T_{v_1}(u) \cdot ... \cdot T_{v_n}(uv_1 ... v_{n-1})$ if $v = v_1 ... v_n$.

\section{A Characterization of $\Fsum(C,T) \not = \emptyset$}	\label{sec:Fsum}

Eq.\ \eqref{eq:SumFP_iterated} suggests that
\begin{equation}	\label{eq:W*}
W^* := \sum_{v \in \V} L(v) C(v)
\end{equation}
is a solution to \eqref{eq:SumFP}. Indeed,
\begin{align}	\label{eq:W*_solves_SumFP}
W^* &= \sum_{v \in \V} L(v) C(v) \notag\\
&= C + \sum_{i \geq 1} T_i \sum_{v \in \V} [L(v)]_i C(iv)	\notag	\\
&= C + \sum_{i \geq 1} T_i [W^*]_i\quad	\text{almost surely,}
\end{align}
where the $[W^*]_i$ are i.i.d.\ copies of $W^*$ and independent of $(C,T)$. Moreover, by non-negativity, $W^*$ is well-defined but may be infinite with positive probability. Therefore, $W^*$ is an element of $\Fsum(C,T)$ if $W^* < \infty$ almost surely, that is, almost sure finiteness of $W^*$ is sufficient for $\Fsum(C,T) \not = \emptyset$. In turn, if $\Fsum(C,T) \not = \emptyset$, one can pick $P \in \Fsum(C,T)$ and assume that $X(v)\stackrel{d}{=}P$ for all $v \in \V$. Then \eqref{eq:SumFP_iterated} implies
\begin{equation*}
X \stackrel{d}{\geq} \sum_{|u|<n} L(u)C(u)
\end{equation*}
for all $n \geq 0$ and, therefore, $X \stackrel{d}{\geq} W^*$. Here $\stackrel{d}{\ge}$ is to be understood in the ordinary sense of stochastic domination. 
Since $X$ is almost surely finite, the same follows for $W^{*}$. In other words, almost sure finiteness of $W^*$ is also necessary for $\Fsum(C,T) \not = \emptyset$.  We have thus proven the following theorem:

\begin{theorem}	\label{Thm:Characterization_of_Fsum_neq_empty}
$\Fsum(C,T) \not = \emptyset$ is equivalent to $W^* < \infty$ almost surely.
\end{theorem}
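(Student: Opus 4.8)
The plan is to establish the two implications separately; both are essentially contained in the display preceding the statement, and the task is to organise them into a clean argument. Throughout, $W^{*}$ is to be regarded a priori as a $[0,\infty]$-valued random variable, and stochastic domination is understood on $[0,\infty]$.

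For the direction ``$W^{*}<\infty$ almost surely $\Rightarrow\Fsum(C,T)\neq\emptyset$'', the key point is the self-similar decomposition of $W^{*}$ along the first generation of the Ulam-Harris tree. Splitting the summands of $W^{*}=\sum_{v\in\V}L(v)C(v)$ according to whether $v=\varnothing$ or $v=iv'$ for some $i\in\N$ and $v'\in\V$, and using \eqref{eq:L(v)} in the form $L(iv')=T_{i}\,[L(v')]_{i}$, one obtains $W^{*}=C+\sum_{i\geq 1}T_{i}[W^{*}]_{i}$ almost surely, where $[W^{*}]_{i}:=\sum_{v'\in\V}[L(v')]_{i}\,C(iv')$. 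The rearrangement is licit since all terms are non-negative. By the i.i.d.\ structure of $\bC\otimes\bT$ and the branching structure of $\V$, the variables $[W^{*}]_{1},[W^{*}]_{2},\ldots$ are i.i.d.\ copies of $W^{*}$ and are independent of $(C,T)=(C(\varnothing),T(\varnothing))$. Hence, if $W^{*}<\infty$ almost surely, $W^{*}$ is a genuine $[0,\infty)$-valued random variable satisfying \eqref{eq:SumFP}, so the law of $W^{*}$ lies in $\Fsum(C,T)$.

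For the converse, I would start from an arbitrary $P\in\Fsum(C,T)$ and realise, on the probability space carrying $\bC\otimes\bT$, a family $\bX=(X(v))_{v\in\V}$ of i.i.d.\ random variables with law $P$, independent of $\bC\otimes\bT$. An induction on $n$, applying \eqref{eq:SumFP} at every vertex of generation $n-1$ and using the independence structure encoded by the $\A_{n}$, yields the $n$-fold iterate \eqref{eq:SumFP_iterated}. Discarding the non-negative sum over generation $n$ gives $X\stackrel{d}{\geq}\sum_{|u|<n}L(u)C(u)$ for every $n$. As $n\to\infty$ the partial sums $\sum_{|u|<n}L(u)C(u)$ increase almost surely to $W^{*}$, and since $X\stackrel{d}{\geq}Y_{n}$ for all $n$ together with $Y_{n}\uparrow W^{*}$ forces $X\stackrel{d}{\geq}W^{*}$ (because $\Prob(W^{*}>t)=\lim_{n}\Prob(Y_{n}>t)\leq\Prob(X>t)$ for each $t$), we conclude $X\stackrel{d}{\geq}W^{*}$. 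As $X<\infty$ almost surely, stochastic domination yields $W^{*}<\infty$ almost surely.

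The only genuinely delicate point is justifying the $n$-fold iteration \eqref{eq:SumFP_iterated} rigorously, i.e.\ that substituting i.i.d.\ copies of a fixed point into the first-generation subtrees preserves the distributional identity at each step; this is where the independence of $\bX$ from $\bC\otimes\bT$ is essential. Everything else — the non-negative rearrangement defining $W^{*}$, the monotone passage to the limit, and the stability of stochastic domination under increasing limits — is routine. Accordingly, the write-up should be short, amounting to a tidy-up of the computation \eqref{eq:W*_solves_SumFP} and the paragraph that follows it.
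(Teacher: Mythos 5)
Your proposal is correct and follows essentially the same route as the paper: the first-generation decomposition $W^{*}=C+\sum_{i\geq 1}T_{i}[W^{*}]_{i}$ for sufficiency, and the $n$-fold iterate \eqref{eq:SumFP_iterated} combined with stochastic domination and the monotone limit $\sum_{|u|<n}L(u)C(u)\uparrow W^{*}$ for necessity. The only difference is that you spell out the passage of $\stackrel{d}{\geq}$ to the increasing limit, which the paper leaves implicit.
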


Two natural questions arise from this theorem:
\begin{itemize}
	\item[(1)]
		Study when $\Prob(W^*<\infty)=1$ holds.
	\item[(2)]
		Provide a description of $\Fsum(C,T)$ if nonempty.
\end{itemize}
Question (1) will be investigated in Sections \ref{sec:nec_conditions} and \ref{sec:suf_conditions}, followed by a full answer to Question (2) under weak assumptions on $(C,T)$ in Section \ref{sec:description}.

\section{Disintegration}	\label{sec:Disintegration}

Our next task is to establish a one-to-one correspondence between the sets $\Fsum(C,T)$ and $\Fsum(0,T)$. This will be accomplished via a disintegration technique and already provides a partial answer to Question (2).

The functional equation \eqref{eq:SumFE} for the Laplace transform $\psi$ of $X$ after $n$ iterations becomes
\begin{equation}	\label{eq:SumFE_iterated}
\psi(t) = \E \Bigg[ \exp\Bigg(-t \sum_{|u|<n} L(u) C(u)\Bigg) \prod_{|v|=n} \psi(L(v) t) \Bigg]
\qquad	(t \in \R).
\end{equation}
For $\psi \in \Fsum(C,T)$, we define the associated \emph{multiplicative martingale} by
\begin{equation} \label{eq:disintegrated}
M_n(t) :=	M_n(t,\bC \otimes \bT)
:= \exp \Bigg(-t \sum_{|u|<n}L(u) C(u)\Bigg) \cdot \prod_{|v|=n} \psi(L(v) t),
\quad n \geq 0.
\end{equation}
The next lemma asserts that $(M_n(t))_{n \geq 0}$ does indeed constitute a martingale:

\begin{lemma}	\label{Lem:Disintegration}
Let $\psi \in \Fsum(C,T)$ and $t \in [0,\infty)$. Then $(M_n(t))_{n \geq 0}$ forms a $[0,1]$-valued martingale with respect to $(\A_n)_{n \geq 0}$ and thus converges almost surely and in mean to a random variable $M(t) = M(t,\bC \otimes \bT)$ satisfying
\begin{equation}	\label{eq:disintegration_integrated}
\E M(t) = \psi(t).
\end{equation}
Moreover, $\liminf_{n \to \infty} M_n(\cdot)$ is a $\bC \otimes \bT$-measurable process such that almost every path is the Laplace transform of a probability distribution on $[0,\infty)$.
\end{lemma}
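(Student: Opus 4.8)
The plan is to check, in turn, the martingale property (with the $[0,1]$ bound), the two convergences together with the value of $\E M(t)$, and finally the structure of $\liminf_n M_n(\cdot)$. The bound $M_n(t)\in[0,1]$ is immediate: $\psi$, being a Laplace transform, is $[0,1]$-valued, and $L(u),C(u),t\ge0$; $\A_n$-measurability of $M_n(t)$ holds because $L(u)$ depends only on the edge weights $T(w)$ with $|w|<|u|$, each $C(u)$ with $|u|<n$ is $\A_n$-measurable, and the branch weights $L(v)$, $|v|=n$, are $\A_n$-measurable.

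For the martingale relation I would split the level-$(n{+}1)$ data along $v=ui$ with $|u|=n$, $i\in\N$, using $L(ui)=L(u)T_i(u)$ from \eqref{eq:L(v)}, to obtain
\[
M_{n+1}(t)=\exp\Bigl(-t\!\!\sum_{|u|<n}\!\!L(u)C(u)\Bigr)\prod_{|u|=n}\Bigl(e^{-tL(u)C(u)}\prod_{i\ge1}\psi\bigl(L(u)T_i(u)t\bigr)\Bigr).
\]
Conditioning on $\A_n$, the leading exponential and the weights $L(u)$ with $|u|=n$ are $\A_n$-measurable, the family $(C(u),T(u))_{|u|=n}$ consists of independent copies of $(C,T)$ independent of $\A_n$, and since all factors lie in $[0,1]$ one may interchange $\E[\,\cdot\mid\A_n]$ with the (infinite) product by dominated convergence on partial products. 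For a single $u$ with $|u|=n$, freezing $L(u)=\ell$ and applying \eqref{eq:SumFE} at the argument $\ell t$ gives $\E[e^{-\ell tC}\prod_{i\ge1}\psi(\ell tT_i)]=\psi(\ell t)$, so $\E[M_{n+1}(t)\mid\A_n]=M_n(t)$. Hence $(M_n(t))_{n\ge0}$ is a $[0,1]$-valued $(\A_n)$-martingale; it converges a.s.\ by the martingale convergence theorem and, being uniformly bounded, also in $L^1$, to a limit $M(t)$ with $\E M(t)=\E M_0(t)$; since the $n=0$ term has empty sum and $L(\varnothing)=1$ one has $M_0(t)=\psi(t)$, which gives \eqref{eq:disintegration_integrated}.

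For the last assertion the key step is to bring in the family $\bX=(X(v))_{v\in\V}$ of i.i.d.\ copies of $X$ (with Laplace transform $\psi$), independent of $\bC\otimes\bT$, and to note that conditioning \eqref{eq:SumFP_iterated} on $\bC\otimes\bT$ gives $M_n(t)=\E[e^{-tR_n}\mid\bC\otimes\bT]$ with $R_n:=\sum_{|u|<n}L(u)C(u)+\sum_{|v|=n}L(v)X(v)$; by \eqref{eq:SumFP_iterated}, $R_n\stackrel{d}{=}X$, so $R_n<\infty$ a.s. Therefore, for each $n$, $M_n(\cdot)$ is a.s.\ the Laplace transform of the regular conditional law $\nu_n$ of $R_n$ given $\bC\otimes\bT$, which is a.s.\ a probability measure on $[0,\infty)$ (its total mass $M_n(0+)=1$ by dominated convergence, using $R_n<\infty$). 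I would then work on the a.s.\ event on which (i) $M_n(q)\to M(q)$ for every nonnegative rational $q$, (ii) every $\nu_n$ is a probability measure, and (iii) $\lim_{q\downarrow0}M(q)=1$ along rationals; all three hold almost surely, (iii) because $q\mapsto M(q)$ is nonincreasing (a pointwise limit of the nonincreasing $M_n(\cdot)$), bounded by $1$, with $\E M(q)=\psi(q)\to\psi(0+)=1$, so monotone convergence forces its limit at $0$ to be $1$. On this event, Helly's selection theorem yields subsequential vague limits of $(\nu_n)$, each a sub-probability measure whose Laplace transform agrees with $M(\cdot)$ on the positive rationals and hence equals one and the same measure $\nu$; thus $\nu_n\to\nu$ vaguely, so $M_n(t)=\int e^{-tx}\,\nu_n(dx)\to\int e^{-tx}\,\nu(dx)$ for every $t>0$ (as $x\mapsto e^{-tx}$ lies in $C_0([0,\infty))$ for $t>0$), while $\nu([0,\infty))=\lim_{t\downarrow0}\int e^{-tx}\,\nu(dx)=\lim_{q\downarrow0}M(q)=1$ by (iii). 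So in fact $M_n(t)$ converges for every $t\ge0$, $\liminf_n M_n(\cdot)$ coincides on this event with $\int e^{-tx}\,\nu(dx)$, the Laplace transform of a probability distribution on $[0,\infty)$. Joint $\bC\otimes\bT$-measurability of $t\mapsto\liminf_n M_n(t)$ is routine since each $M_n(\cdot,\bC\otimes\bT)$ is jointly measurable.

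I expect the main difficulty to be this last point: the martingale convergence only delivers convergence of $M_n(t)$ for each fixed $t$ outside a $t$-dependent null set, and it must be upgraded to simultaneous convergence in $t$ with the limit identified as a genuine (not merely sub-)probability Laplace transform. The representation $M_n(\cdot)=\E[e^{-tR_n}\mid\bC\otimes\bT]$ with $R_n$ a.s.\ finite, together with the ``no escape of mass'' input $\psi(0+)=1$ encoded in (iii), is what closes this gap; one could alternatively exploit monotonicity of $M_n(\cdot)$ in $t$ in place of Helly's theorem, but the measure-theoretic route seems cleanest.
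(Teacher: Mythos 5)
Your proof is correct and follows essentially the same route as the paper: the martingale property via conditioning on $\A_n$ and the functional equation \eqref{eq:SumFE} (which the paper delegates to the calculations in Biggins--Kyprianou), then identification of $\liminf_n M_n(\cdot)$ through vague compactness of the associated (sub-)probability measures, uniqueness of subsequential limits from agreement with $M$ at rational arguments, and recovery of total mass $1$ from $\E M(t)=\psi(t)\to1$ together with monotonicity. Your representation $M_n(t)=\E[e^{-tR_n}\mid\bC\otimes\bT]$ with $R_n\stackrel{d}{=}X$ is a nice way to make explicit the fact, used implicitly in the paper, that almost every path of $M_n(\cdot)$ is a genuine probability Laplace transform.
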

\begin{proof}
A straightforward generalization of the calculations in the proof of Theorem 3.1 in \cite{BK1997} yields the martingale property for $(M_n(t))_{n \geq 0}$ and thus, by standard theory, the convergence assertions as well as \eqref{eq:disintegration_integrated}.

For the sake of definiteness everywhere, let $M(t,\mathbf{c}\otimes\mathbf{t}) := \liminf_{n \to \infty} M_n(t,\mathbf{c}\otimes\mathbf{t})$ for $\mathbf{c} \otimes \mathbf{t} \in (\R_{\geq 0}^{\N_0})^{\V}$. Now choose a Borel measurable set $A \subset (\R_{\geq 0}^{\N_0})^{\V}$ satisfying $\Prob(\bC \otimes \bT \in A) = 1$ and such that $M_n(t, \mathbf{c} \otimes \mathbf{t}) \to M(t, \mathbf{c} \otimes \mathbf{t})$ for all rational $t \geq 0$ and $\mathbf{c} \otimes \mathbf{t} \in A$. Fix $\mathbf{c} \otimes \mathbf{t} \in A$. $(M_n(\cdot, \mathbf{c} \otimes \mathbf{t}))_{n \geq 0}$ is a sequence of Laplace transforms of probability distributions on $[0,\infty)$. Pick any vaguely convergent subsequence of this sequence of distributions and let $\Psi(\cdot,\mathbf{c} \otimes \mathbf{t})$ denote the Laplace transform of its limit. Then, necessarily, $\Psi(t,\mathbf{c} \otimes \mathbf{t}) = M(t, \mathbf{c} \otimes \mathbf{t})$ for all rational $t > 0$. Moreover, as $\Psi(\cdot, \mathbf{c} \otimes \mathbf{t})$ is continuous, it is uniquely determined by its values at rational arguments and, thus, by $M(\cdot, \mathbf{c} \otimes \mathbf{t})$. Hence, any vaguely convergent subsequence of the sequence of distributions associated with $(M_n(\cdot, \mathbf{c} \otimes \mathbf{t}))_{n \geq 0}$ has the same limit. Therefore, by a combination of the Helly-Bray theorem and the continuity theorem for Laplace transforms, $M_n(t, \mathbf{c} \otimes \mathbf{t}) \to \Psi(t, \mathbf{c} \otimes \mathbf{t})$ for all $t > 0$. Thus $\Psi := \Psi(\cdot,\bC \otimes \bT)$ equals $M$ almost surely and almost every path of $\Psi$ is the Laplace transform of a (possibly degenerate) probability measure on $[0,\infty)$. It remains to prove that $\Psi(0) = 1$ almost surely. To this end, notice that
\begin{equation*}
\| 1-\Psi(t)\|_1	=	1-\E\Psi(t)	=	1-\E M(t)	=	1-\psi(t)	\to 0	\quad	\text{as }	t \to 0,
\end{equation*}
that is, $\Psi(t) \to 1$ in $\mathcal{L}^1$. In particular, there exists a sequence $t_n \downarrow 0$ such that $\Psi(t_n) \to 1$ almost surely as $n \to \infty$. By the monotonicity of $\Psi$, this implies that $\Psi(t) \to 1$ almost surely as $t \downarrow 0$ and, thus, $\Psi(0) = 1$ almost surely. 
\end{proof}

We call the stochastic process $M=(M(t))_{t \geq 0}$ the \emph{disintegration of $\psi$} and also a \emph{disintegrated fixed point}. According to Lemma \ref{Lem:Disintegration}, the finding of all disintegrated fixed points also provides us with a description of $\Fsum(C,T)$. Moreover, disintegrations have the useful property of satisfying a pathwise version of the functional equation \eqref{eq:SumFE}.

\begin{lemma} \label{Lem:disintSFPE}
Let $\psi \in \Fsum(C,T)$ and let $M$ denote its disintegration. Then
\begin{equation} \label{eq:disintegrated_FPE}
M(t) =	\exp\Bigg(-t \sum_{|u|<n} L(u)C(u)\Bigg) \cdot \prod_{|v|=n} [M]_v(L(v)t)
\quad	\text{almost surely}
\end{equation}
for each $t \in \R$ and $n \in \N_0$.
\end{lemma}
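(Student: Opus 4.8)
The plan is to establish \eqref{eq:disintegrated_FPE} first for rational $t$ on a single common full-measure event and then extend to all $t \in \R$ by continuity, using the fact already extracted in Lemma \ref{Lem:Disintegration} that almost every path of $M$ is a Laplace transform (hence continuous on $[0,\infty)$ and, by the convention $\psi(t)=1$ for $t<0$ implicit in treating Laplace transforms of non-negative variables, trivially equal to $1$ for $t<0$; more precisely we only need $t\ge 0$, and the statement for $t<0$ is vacuous or follows by the same convention). So fix $n \in \N_0$ and a rational $t \ge 0$.

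First I would write down the finite-level identity obtained by iterating the definition \eqref{eq:disintegrated} of $M_m(t)$ down through level $n$. Concretely, for $m \ge n$ one has the purely algebraic decomposition
\begin{equation*}
M_m(t) = \exp\Bigg(-t\sum_{|u|<n} L(u)C(u)\Bigg)\cdot \prod_{|v|=n} [M_{m-n}]_v(L(v)t),
\end{equation*}
which is just a regrouping of the sum $\sum_{|u|<m}L(u)C(u)$ into the part with $|u|<n$ and, for each $v$ with $|v|=n$, the part of the tree below $v$ (using $L(u) = L(v)[L(w)]_v$ when $u=vw$), together with the analogous regrouping of the product $\prod_{|w|=m}\psi(L(w)t)$. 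I would verify this identity carefully since it is the combinatorial heart of the argument, but it requires no probability — it holds surely for every realization.

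Next I would pass to the limit $m\to\infty$. On the left, $M_m(t)\to M(t)$ almost surely by Lemma \ref{Lem:Disintegration}. On the right, the prefactor $\exp(-t\sum_{|u|<n}L(u)C(u))$ does not depend on $m$, and for each fixed $v$ with $|v|=n$ the shifted process $([M_{m-n}]_v(s))_{m}$ converges almost surely to $[M]_v(s)$ for every rational $s\ge 0$: indeed $[M_k]_v$ is, as a function of the weight ensemble below $v$, the same functional of an i.i.d.\ copy of $\bC\otimes\bT$ that $M_k$ is of $\bC\otimes\bT$, so the almost sure convergence $M_k\to M$ at rational arguments transfers. Taking $s = L(v)t$ (rational when $t$ is, on the event where $L(v)$ is rational — but to avoid that nuisance, note that the convergence $[M_{k}]_v \to [M]_v$ actually holds at \emph{all} $s>0$ on the full-measure set produced in Lemma \ref{Lem:Disintegration}, via the Helly--Bray plus continuity argument there), we get convergence of each factor, and since the product over $|v|=n$ is finite, the whole right-hand side converges to $\exp(-t\sum_{|u|<n}L(u)C(u))\prod_{|v|=n}[M]_v(L(v)t)$ almost surely. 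Hence \eqref{eq:disintegrated_FPE} holds almost surely for each fixed rational $t\ge 0$ and each fixed $n$.

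Finally I would remove the ``for each fixed $t$'' by taking a countable intersection: there is one full-measure event on which \eqref{eq:disintegrated_FPE} holds simultaneously for all rational $t\ge 0$ and all $n\in\N_0$, and on which every path $M(\cdot)$ and every path $[M]_v(\cdot)$ is a (continuous) Laplace transform. On that event both sides of \eqref{eq:disintegrated_FPE}, as functions of $t\ge 0$, are continuous (a finite product of continuous Laplace transforms times a continuous exponential), so agreement on the dense set $\Q\cap[0,\infty)$ forces agreement for all $t\ge 0$; the case $t<0$ is handled by the Laplace-transform convention. The main obstacle I anticipate is bookkeeping rather than depth: one must be careful that the subtree processes $[M]_v$ for different $v$ at level $n$ are the correct i.i.d.\ copies, that the shift operators compose properly so the algebraic decomposition of $M_m$ is exactly right, and that the almost-sure convergence of $M_k$ at rational arguments is genuinely inherited by each $[M_k]_v$ at the relevant (possibly irrational) argument $L(v)t$ — which is why invoking the pathwise continuity established in Lemma \ref{Lem:Disintegration} is essential rather than cosmetic.
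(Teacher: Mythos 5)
Your overall strategy---the exact algebraic decomposition of $M_m$ for $m\geq n$ into the level-$(<n)$ prefactor times the shifted martingales $[M_{m-n}]_v$ at the level-$n$ vertices, passage to the limit $m\to\infty$, and the extension from rational $t$ to all $t\geq 0$ via the pathwise continuity secured in Lemma \ref{Lem:Disintegration}---is precisely the standard route, and it is what the paper intends by deferring to Lemma 5.2 of \cite{AM2009}. The decomposition identity itself is correct for every realization (non-negativity of the summands and the fact that all factors lie in $[0,1]$ make the regrouping legitimate even with infinitely many terms), and your handling of the irrational arguments $L(v)t$ via the full-measure continuity event is sound.

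There is, however, one genuine gap: your justification for interchanging $\lim_{m\to\infty}$ with $\prod_{|v|=n}$ is that ``the product over $|v|=n$ is finite.'' The paper explicitly allows $\Prob(N=\infty)>0$ (see the remark after Theorem \ref{Thm:Liu_better} and Example \ref{Exa:infm(theta)leq1_not_sufficient}), in which case infinitely many $v$ with $|v|=n$ have $L(v)>0$ and the product has infinitely many nontrivial factors. Termwise convergence of $[0,1]$-valued factors does \emph{not} imply convergence of the infinite product: with $a_{m,j}=\1_{\{j\leq m\}}$ every factor tends to $1$ while every product equals $0$. What your argument does give is one inequality: for every finite $F\subset\N^n$ one has $\limsup_m\prod_{|v|=n}[M_{m-n}]_v(L(v)t)\leq\prod_{v\in F}[M]_v(L(v)t)$, hence $\Lambda:=\lim_m\prod_{|v|=n}[M_{m-n}]_v(L(v)t)$ (which exists on the event where the prefactor is positive, because $M_m(t)$ converges and the prefactor does not depend on $m$; where the prefactor vanishes, both sides of \eqref{eq:disintegrated_FPE} are $0$ and there is nothing to prove) satisfies $\Lambda\leq\Pi:=\prod_{|v|=n}[M]_v(L(v)t)$. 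The missing reverse inequality comes from comparing conditional expectations given $\A_n$: the subtree weight families rooted at level $n$ are i.i.d.\ and independent of $\A_n$, and $\E M_k(s)=\E M(s)=\psi(s)$ by the martingale property, so
\begin{equation*}
\E[\Lambda\,|\,\A_n]
=\lim_{k\to\infty}\E\Bigg[\prod_{|v|=n}[M_k]_v(L(v)t)\,\Bigg|\,\A_n\Bigg]
=\prod_{|v|=n}\psi(L(v)t)
=\E[\Pi\,|\,\A_n],
\end{equation*}
where the second and third equalities use conditional independence together with monotone approximation by finite subproducts, and the first uses conditional bounded convergence. Since $\Pi\geq\Lambda$ and $\E[\Pi-\Lambda\,|\,\A_n]=0$, it follows that $\Pi=\Lambda$ almost surely, which closes the gap. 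If you either insert this step or restrict to $N<\infty$ almost surely (where the effective product really is finite), your proof is complete.
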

\begin{proof}
Only minor changes are needed to derive the result from the proof of Lemma 5.2 in \cite{AM2009}.
\end{proof}

Lemma \ref{Lem:disintSFPE} has the following corollary:

\begin{corollary}	\label{Cor:bounded_above}
For any $\psi \in \Fsum(C,T)$ with disintegration $M$ and any $t \geq 0$, we have
\begin{equation}
M(t)	\leq	\exp(-tW^*)	\quad	\text{almost surely.}
\end{equation}
\end{corollary}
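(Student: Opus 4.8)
The plan is to derive the corollary directly from the pathwise functional equation in Lemma~\ref{Lem:disintSFPE} by letting $n \to \infty$. First I would fix $t \geq 0$ and $\psi \in \Fsum(C,T)$ with disintegration $M$, and recall from \eqref{eq:disintegrated_FPE} that for every $n \in \N_0$,
\begin{equation*}
M(t) = \exp\Bigg(-t \sum_{|u|<n} L(u)C(u)\Bigg) \cdot \prod_{|v|=n} [M]_v(L(v)t) \quad \text{almost surely.}
\end{equation*}
Since each factor $[M]_v(L(v)t)$ is the value of a Laplace transform of a probability distribution on $[0,\infty)$ evaluated at the non-negative argument $L(v)t$, we have $0 \leq [M]_v(L(v)t) \leq 1$ for every $v$, so the product over $|v|=n$ is bounded above by $1$. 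This immediately gives
\begin{equation*}
M(t) \leq \exp\Bigg(-t \sum_{|u|<n} L(u)C(u)\Bigg) \quad \text{almost surely}
\end{equation*}
for each fixed $n$.

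Next I would take the intersection over $n \in \N_0$ of these almost-sure events (a countable intersection, hence still almost sure) to obtain that, almost surely, $M(t) \leq \exp(-t \sum_{|u|<n} L(u)C(u))$ holds simultaneously for all $n$. Letting $n \to \infty$, the partial sums $\sum_{|u|<n} L(u)C(u)$ increase to $W^* = \sum_{v \in \V} L(v)C(v)$ by non-negativity of the summands, so $\exp(-t \sum_{|u|<n} L(u)C(u)) \downarrow \exp(-tW^*)$ (with the convention $\exp(-\infty) = 0$ when $W^* = \infty$). Passing to the limit in the inequality yields $M(t) \leq \exp(-tW^*)$ almost surely, which is the claim.

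I do not expect a genuine obstacle here; the only point requiring a word of care is the measurability and countable-intersection bookkeeping, i.e. that \eqref{eq:disintegrated_FPE} holds on a single almost-sure event valid for all $n$ at once (for the fixed $t$), which is immediate since $\N_0$ is countable. One should also note that the bound is trivially true on the event $\{W^* = \infty\}$, where the right-hand side is $0$ and the inequality is only non-vacuous when combined with a separate argument, but in fact the above derivation already covers this case via the convention $\exp(-\infty)=0$ and the monotone convergence of the partial sums. Hence the corollary follows with essentially no additional work beyond Lemma~\ref{Lem:disintSFPE} and the elementary bound $[M]_v \leq 1$.
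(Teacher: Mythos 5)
Your proof is correct and follows essentially the same route as the paper: bound the product $\prod_{|v|=n}[M]_v(L(v)t)$ by $1$ in the pathwise equation \eqref{eq:disintegrated_FPE} and let $n\to\infty$, using monotone convergence of the partial sums to $W^*$. The extra remarks on countable-intersection bookkeeping and the convention $\exp(-\infty)=0$ are fine but not needed beyond what the paper states.
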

\begin{proof}
For any $t \ge 0$, by \eqref{eq:disintegrated_FPE},
\begin{equation*}
M(t)
\leq	\exp\Bigg(-t \sum_{|u|<n} L(u)C(u)\Bigg)
\underset{n \to \infty}{\longrightarrow}		\exp(-tW^*)
\quad	\text{almost surely.}
\end{equation*}
\end{proof}

We now use this corollary to prove a one-to-one correspondence between $\Fsum(C,T)$ and the set of disintegrations of the elements of $\Fsum(0,T)$. As the latter set in turn is one-to-one to the set $\Fsum(0,T)$ itself (by Lemma \ref{Lem:Disintegration}), the next result also proves the announced one-to-one correspondence between $\Fsum(0,T)$ and $\Fsum(C,T)$.

\begin{theorem}	\label{Thm:M=exp(-W*)xM_{hom}}
If $\Fsum(C,T) \not = \emptyset$, then any disintegration $M$ of a solution $\psi$ to \eqref{eq:SumFP} has a representation of the form
\begin{equation}	\label{eq:M=exp(-W*)xM_{hom}}
M(t)	=	\exp(-tW^*) M_{\mathrm{hom}}(t)	\quad	\text{almost surely}	\quad	(t \geq 0)
\end{equation}
where $M_{\hom}$ denotes the disintegration of an element of $\Fsum(0,T)$.
Conversely, any Laplace transform $\psi$ obtained by taking the expectation in \eqref{eq:M=exp(-W*)xM_{hom}} defines a solution to \eqref{eq:SumFP}.
\end{theorem}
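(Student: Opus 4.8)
The plan is to prove the two implications separately, using Corollary~\ref{Cor:bounded_above} as the crucial input for the forward direction. For the forward direction, suppose $\psi \in \Fsum(C,T)$ with disintegration $M$. By Corollary~\ref{Cor:bounded_above}, $M(t) \le \exp(-tW^*)$ almost surely for each fixed $t$, and since both sides are monotone right-continuous in $t$, this inequality holds simultaneously for all $t \ge 0$ on an almost sure event. On $\{W^* < \infty\}$ (which has probability one by Theorem~\ref{Thm:Characterization_of_Fsum_neq_empty}), we may thus define $M_{\hom}(t) := \exp(tW^*) M(t)$, so that \eqref{eq:M=exp(-W*)xM_{hom}} holds by construction; the bound $M(t) \le \exp(-tW^*)$ guarantees $M_{\hom}(t) \in [0,1]$. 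The content to verify is that $M_{\hom}$ is the disintegration of an element of $\Fsum(0,T)$. Here I would invoke Lemma~\ref{Lem:disintSFPE}: for each $n$ and $t$, $M(t) = \exp(-t\sum_{|u|<n}L(u)C(u)) \prod_{|v|=n}[M]_v(L(v)t)$ almost surely. Substituting $[M]_v(L(v)t) = \exp(-L(v)t\,[W^*]_v)M_{\hom,v}(L(v)t)$ and using the additive decomposition $W^* = \sum_{|u|<n}L(u)C(u) + \sum_{|v|=n}L(v)[W^*]_v$ (which is just \eqref{eq:W*} reorganised by generation), the two inhomogeneous exponential factors cancel exactly, leaving $M_{\hom}(t) = \prod_{|v|=n}[M_{\hom}]_v(L(v)t)$ almost surely. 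Taking expectations and letting $\psi_{\hom}(t) := \E M_{\hom}(t)$, one gets $\psi_{\hom}(t) = \E\prod_{|v|=n}\psi_{\hom}(L(v)t)$, i.e.\ $\psi_{\hom}$ satisfies \eqref{eq:SumFE_hom} (the $n=1$ case), so $\psi_{\hom} \in \Fsum(0,T)$. One then checks that $M_{\hom}$, or rather its $\liminf$-version, is indeed the disintegration of $\psi_{\hom}$ in the sense of Lemma~\ref{Lem:Disintegration} — the martingale $(M_{\hom,n}(t))_{n\ge0}$ built from $\psi_{\hom}$ converges to $M_{\hom}(t)$ because, by the cancellation above, $M_{\hom,n}(t) = \exp(t\sum_{|u|<n}L(u)C(u))M_n(t)$ and both factors converge.

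For the converse, start with any $\psi_{\hom} \in \Fsum(0,T)$ with disintegration $M_{\hom}$, set $M(t) := \exp(-tW^*)M_{\hom}(t)$, and let $\psi(t) := \E M(t)$. I would verify that $\psi$ satisfies \eqref{eq:SumFE} directly: using \eqref{eq:W*_solves_SumFP} to write $W^* = C + \sum_{i\ge1}T_i[W^*]_i$ and the pathwise homogeneous equation $M_{\hom}(t) = \prod_{i\ge1}[M_{\hom}]_i(T_it)$ from Lemma~\ref{Lem:disintSFPE} applied to $\psi_{\hom}$, one obtains
\begin{equation*}
M(t) = e^{-tC}\prod_{i\ge1}\exp(-T_it[W^*]_i)[M_{\hom}]_i(T_it) = e^{-tC}\prod_{i\ge1}[M]_i(T_it)\quad\text{a.s.}
\end{equation*}
Taking expectations, conditioning on $(C,T)$, and exploiting the independence and i.i.d.\ structure of the shifted copies $[M]_i$ yields $\psi(t) = \E[e^{-tC}\prod_{i\ge1}\psi(T_it)]$, which is exactly \eqref{eq:SumFE}; hence $\psi \in \Fsum(C,T)$. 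One should also note $\psi$ is genuinely a Laplace transform of a probability law on $[0,\infty)$: $M(t) \le M_{\hom}(t) \le 1$ and $M(0) = M_{\hom}(0) = 1$ a.s.\ by Lemma~\ref{Lem:Disintegration}, so $\psi(0)=1$, and $\psi$ is monotone and continuous at $0$ by dominated convergence.

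The main obstacle I anticipate is the bookkeeping around measurability and the precise sense in which $M_{\hom}$ qualifies as \emph{the} disintegration of $\psi_{\hom}$: the disintegration in Lemma~\ref{Lem:Disintegration} is defined as a specific $\liminf$ of martingales, so the identity $M_{\hom}(t) = \exp(tW^*)M(t)$ must be shown to produce (a version of) that canonical object rather than merely some process with the right expectation. The clean way around this is the observation already noted — that the exponential conjugation intertwines the two martingale sequences $(M_n(t))$ and $(M_{\hom,n}(t))$ generation by generation, so convergence of one transfers to the other and the $\liminf$-versions agree almost surely. A secondary technical point is upgrading the ``for each fixed $t$'' almost-sure statements (from Corollary~\ref{Cor:bounded_above} and Lemma~\ref{Lem:disintSFPE}) to ``almost surely for all $t$'' statements, which is routine given that all processes in sight are monotone in $t$ and hence determined by their restriction to $\Q_{\ge0}$.
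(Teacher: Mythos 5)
Your overall strategy coincides with the paper's: the converse direction is argued exactly as in the paper (decompose $W^*$ at the first generation via $W^* = C + \sum_{i\geq 1}T_i[W^*]_i$, apply the pathwise homogeneous equation to $M_{\mathrm{hom}}$, and condition on $\A_1$), and the algebraic core of the forward direction --- defining $\Phi(t) := \exp(tW^*)M(t) \in [0,1]$ via Corollary~\ref{Cor:bounded_above} and cancelling the inhomogeneous exponentials generation by generation to obtain $\Phi(t) = \prod_{|v|=n}[\Phi]_v(L(v)t)$ almost surely --- is also exactly the paper's computation.

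However, the step where you identify your candidate $M_{\mathrm{hom}} = \Phi$ with the canonical disintegration of $\psi_{\mathrm{hom}} := \E \Phi(\cdot)$ rests on a false identity. You claim $M_{\mathrm{hom},n}(t) = \exp(t\sum_{|u|<n}L(u)C(u))\,M_n(t)$; but the right-hand side equals $\prod_{|v|=n}\psi(L(v)t)$ (built from the \emph{inhomogeneous} $\psi$), whereas the martingale associated with $\psi_{\mathrm{hom}}$ is $\prod_{|v|=n}\psi_{\mathrm{hom}}(L(v)t)$. These differ already at finite $n$: for instance, if $\Phi \equiv 1$ (so $\psi_{\mathrm{hom}} \equiv 1$ and $M_{\mathrm{hom},n} \equiv 1$) while $C>0$ with positive probability, then $\prod_{|v|=n}\psi(L(v)t) = \prod_{|v|=n}\E e^{-L(v)tW^*} < 1$. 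The two sequences do share the same almost sure limit, but your argument does not establish this. The correct justification --- the one the paper uses --- is L\'evy's upward theorem: since the $[\Phi]_v$, $|v|=n$, are conditionally independent given $\A_n$ with $\E\bigl[[\Phi]_v(L(v)t)\,\big|\,\A_n\bigr] = \psi_{\mathrm{hom}}(L(v)t)$, one gets $\prod_{|v|=n}\psi_{\mathrm{hom}}(L(v)t) = \E\bigl[\prod_{|v|=n}[\Phi]_v(L(v)t)\,\big|\,\A_n\bigr] = \E[\Phi(t)\,|\,\A_n] \to \Phi(t)$ almost surely, because $\Phi(t)$ is $\A_{\infty}$-measurable. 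With this substitution your proof closes; the remaining points you flag (upgrading fixed-$t$ null sets via monotonicity, checking that $\psi$ and $\psi_{\mathrm{hom}}$ are genuine Laplace transforms as mixtures of Laplace transforms) are handled as you indicate.
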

\begin{proof}
We first prove the converse part of the theorem. Let $M_{\mathrm{hom}}$ denote the disintegration of an arbitrary homogeneous solution $\varphi$. Then, with $M(t)$ as defined in \eqref{eq:M=exp(-W*)xM_{hom}}, we infer
\begin{eqnarray*}
M(t)
& = &
\exp(-tW^*) M_{\mathrm{hom}}(t)	\\
& = &
\exp\Bigg(-t\bigg(C+\sum_{i \geq 1} T_i [W^*]_i\bigg)\Bigg) \prod_{i \geq 1}[M_{\mathrm{hom}}]_i(T_i t)	\\
& = &
\exp(-t C) \prod_{i \geq 1} \exp(-tT_i[W^*]_i) [M_{\mathrm{hom}}]_i(T_i t),
\end{eqnarray*}
where we have used that $M$ solves a disintegrated version of the homogeneous functional equation, see Lemma \ref{Lem:disintSFPE} with $C=0$. Taking first the conditional expectation given $\A_1$ and then the unconditional expectation yields that $\psi(t) := \E M(t)$ satisfies \eqref{eq:SumFE}. Being a mixture of Laplace transforms of probability distributions on $[0,\infty)$ by Lemma \ref{Lem:Disintegration}, $\psi$ is the Laplace transform of a proper distribution on $[0,\infty)$ and thus an element of $\Fsum(C,T)$.

Now suppose that $\psi \in \Fsum(C,T)$ and denote by $(M_n(t))_{n \geq 0}$ the corresponding multiplicative martingale with almost sure limit $M$. Then
\begin{equation*}
M_n(t) =\exp\Bigg(-t \sum_{|u|<n} L(u)C(u) \Bigg) \, \prod_{|v|=n} \psi(L(v)t)
\leq	\prod_{|v|=n} \psi(L(v)t) =: \Phi_n(t).
\end{equation*}
Here $\exp(-t \sum_{|u|<n} L(u)C(u))$ converges to $\exp(-tW^*)$, which is positive by Theorem \ref{Thm:Characterization_of_Fsum_neq_empty}. On the other hand, $M_n(t)$ tends to $M(t)$ as $t \to \infty$. Consequently, $\Phi_n(t)$ tends to $\Phi(t):=M(t)/\exp(-tW^*)$ and $0 \leq \Phi(t) \leq 1$. From Lemma \ref{Lem:disintSFPE} we infer that $M$ satisfies \eqref{eq:disintegrated_FPE}. Using this and the definition of $W^*$, we get
\begin{align*}
\Phi(t)
&=
\frac{M(t)}{\exp(-tW^*)}\\
&= \frac{\exp\Big(-t \sum_{|u|<n} L(u)C(u)\Big) \cdot \prod_{|v|=n} [M]_v(L(v)t)}
{\exp\left(-t \sum_{|u|<n} L(u) C(u) - t\sum_{|v|=n} L(v)[W^*]_v\right)}	\\
&= 
\frac{\prod_{|v|=n} [M]_v(L(v)t)}{\prod_{|v|=n} \exp(-tL(v)[W^*]_v)} \\
&= \prod_{|v|=n}	[\Phi]_v(L(v)t)	\quad	\text{almost surely.}
\end{align*}
Taking expectations yields that $\varphi(t) = \E \Phi(t)$ is a solution to \eqref{eq:SumFE_hom}. Moreover, $\varphi$ is the Laplace transform of a probability distribution on $[0,\infty)$ as a mixture of Laplace transforms of probability distributions on $[0,\infty)$. Denote by $M_{\mathrm{hom}}$ the disintegration of $\varphi$. It remains to prove that $M_{\mathrm{hom}}$ is a version of the process $\Phi$. But this is immediate from the following calculation based on an application of the martingale convergence theorem:
\begin{align*}
M_{\mathrm{hom}}(t)
&=
\lim_{n \to \infty} \prod_{|v|=n} \varphi(L(v)t)\\
&= \lim_{n \to \infty} \E\Big[ \prod_{|v|=n} [\Phi]_v(L(v)t)\Big|\, \A_n\Big]	\\
&= 
\lim_{n \to \infty} \E[ \Phi(t) | \A_n]
= \Phi(t)	\quad	\text{almost surely},
\end{align*}
where we have utilized that $\Phi(t)$ is $\A_{\infty}$-measurable.
\end{proof}

\section{Known and simple cases}	\label{sec:known_and_simple}

Not surprisingly, there is no global answer to Questions (1) and (2). Rather, we will have to distinguish between different situations, in which quite different qualitative behaviours ensue.

The first important special case of Eq.\ \eqref{eq:SumFP}, viz.
\begin{equation}	\label{eq:perpetuity}
X	\stackrel{d}{=} C + T_{1} X_{1},
\end{equation}
occurs when the number of positive $T_i$ is at most one, i.e.
\begin{equation}	\label{eq:N}
N := \sum_{i \geq 1}\1_{\{T_{i}>0\}} \le 1\quad\text{almost surely}
\end{equation}
and $\Prob(T_{1}=1,C=0)<1$. Then a solution, if it exists, is called perpetuity. Perpetuities have been studied in a series of papers, see \cite{GM2000} for a comprehensive overview of the existing literature. Theorem 3.1 in \cite{GM2000} provides a complete description of $\Fsum(C,T)$ in the case $\Prob(N \leq 1) = 1$. Therefore, we will exclude this case in what follows and assume that there is a positive probability for effective branching: $\Prob(N > 1) > 0$.

The next result covers some degenerate cases that will also be excluded from the analysis thereafter. Here and in what follows, we denote the Dirac measure at $c \in \R$ by $\delta_c$.
 
\begin{proposition}	\label{Prop:EN_leq_1}
Suppose that $\Prob(N > 1) > 0$.
\begin{itemize}
	\item[(a)]
		If $\E N \leq 1$, then $W^* < \infty$ almost surely
		and (the distribution of) $W^*$ is the unique solution to \eqref{eq:SumFP}.
	\item[(b)]
		If $\E N > 1$ but $\Prob(T_i \in \{0,1\} \text{ for all } i \geq 1) = 1$, then
		\begin{equation*}
		\Fsum(C,T)	=	
			\begin{cases}
			\emptyset	&	\text{if } \Prob(C > 0) > 0,	\\
			\{\delta_0\}	&	\text{if } \Prob(C = 0) = 1.
			\end{cases}
		\end{equation*}
\end{itemize}
\end{proposition}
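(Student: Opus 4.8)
The plan is to realise, inside the weighted branching model of Section~\ref{sec:WBM}, an embedded Galton--Watson process and to deduce both statements from its classification. The random set $\mathcal T := \{v \in \V : L(v) > 0\}$ is closed under passing to ancestors, and a vertex $v \in \mathcal T$ has exactly $N(v) := \sum_{i \geq 1} \1_{\{T_i(v) > 0\}}$ children lying in $\mathcal T$; since the $N(v)$ with $|v| = n$ are i.i.d.\ copies of $N$ and independent of $\A_n$, the counts $Z_n := \#(\mathcal T \cap \N^n)$, $n \geq 0$, form a Galton--Watson process with offspring law $\mathcal L(N)$ and $Z_0 = 1$. By assumption $\Prob(N > 1) > 0$, so this process is non-degenerate; write $q$ for its extinction probability and $\mathcal S := \{Z_n \geq 1 \text{ for all } n\}$ for its survival event, so $\Prob(\mathcal S) = 1 - q$. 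Note also that $W^* = \sum_{v \in \mathcal T} L(v) C(v)$, since $L$ vanishes off $\mathcal T$.

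\emph{Part (a).} Here $\E N \leq 1$, and since $\Prob(N = 1) < 1$ the embedded process is subcritical or non-trivially critical; hence $q = 1$ and $\mathcal T$ is almost surely finite. Then $W^*$ is an almost surely finite sum of almost surely finite terms, so $W^* < \infty$ a.s., and Theorem~\ref{Thm:Characterization_of_Fsum_neq_empty} yields $W^* \in \Fsum(C,T)$. For uniqueness, take any $X \in \Fsum(C,T)$ and realise it through the family $\bX$. For every $n$ exceeding the (almost surely finite) height of $\mathcal T$ one has $\mathcal T \cap \N^n = \emptyset$, so $\sum_{|v|=n} L(v) X(v) = 0$ and $\sum_{|u|<n} L(u) C(u) = W^*$; thus the right-hand side of \eqref{eq:SumFP_iterated} equals $W^*$ for all large $n$, hence converges to $W^*$ in distribution, while being distributed as $X$ for every $n$. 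Therefore $X \stackrel{d}{=} W^*$.

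\emph{Part (b).} Now $\E N > 1$, so $q < 1$, and $\Prob(T_i \in \{0,1\}\text{ for all }i) = 1$, so $L$ is $\{0,1\}$-valued. If $\Prob(C = 0) = 1$ then $W^* = 0$, so $\delta_0 \in \Fsum(0,T)$; and if $\psi$ is the Laplace transform of any element of $\Fsum(0,T)$, then $\prod_{i \geq 1} \psi(T_i t) = \psi(t)^N$, so \eqref{eq:SumFE_hom} becomes $\psi(t) = \E[\psi(t)^N] = f(\psi(t))$ for all $t \geq 0$, with $f(s) := \E s^N$. Since $\Prob(N \geq 2) > 0$ makes $f$ strictly convex on $[0,1]$ while $f'(1^-) = \E N > 1$, the only fixed points of $f$ in $[0,1]$ are $q$ and $1$; as $\psi$ is continuous with $\psi(0) = 1$, this forces $\psi \equiv 1$, so $\Fsum(0,T) = \{\delta_0\}$.

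It remains to treat $\Prob(C > 0) > 0$; by Theorem~\ref{Thm:Characterization_of_Fsum_neq_empty} it suffices to prove $\Prob(W^* = \infty) > 0$, and I claim $W^* = \infty$ almost surely on $\mathcal S$, which has probability $1 - q > 0$. Put $W^*_n := \sum_{|v|=n} L(v) C(v) = \sum_{v \in \mathcal T \cap \N^n} C(v) \leq W^*$. Conditionally on $\A_n$, the colours $(C(v))_{v \in \mathcal T \cap \N^n}$ are $Z_n$ i.i.d.\ copies of $C$, so $\Prob(W^*_n \leq K \mid \A_n) = \phi_{Z_n}(K)$, where $\phi_m(K) := \Prob(\sum_{j=1}^m C_j \leq K)$ for i.i.d.\ copies $C_j$ of $C$. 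Since $\Prob(C > 0) > 0$, the Borel--Cantelli lemma gives $\sum_{j=1}^m C_j \to \infty$ almost surely, hence $\phi_m(K) \to 0$ as $m \to \infty$ for each fixed $K$; moreover $Z_n \to \infty$ almost surely on $\mathcal S$ (a standard property of non-degenerate supercritical Galton--Watson processes). From $\{W^* \leq K\} \subseteq \{W^*_n \leq K\}$ we get $\Prob(W^* \leq K \mid \A_n) \leq \phi_{Z_n}(K) \to 0$ almost surely on $\mathcal S$, while $\Prob(W^* \leq K \mid \A_n) \to \1_{\{W^* \leq K\}}$ almost surely by L\'evy's zero--one law since $W^*$ is $\A_\infty$-measurable. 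Hence $W^* > K$ a.s.\ on $\mathcal S$ for every $K \in \N$, so $W^* = \infty$ a.s.\ on $\mathcal S$ and $\Fsum(C,T) = \emptyset$. The step needing care is this last one: one must \emph{not} regard the colours $\{C(v) : v \in \mathcal T\}$ as i.i.d.\ and independent of $\mathcal T$ — $C(v)$ and $T(v)$ may be dependent, and $T(v)$ helps shape $\mathcal T$ — but only exploit the conditional independence available generation by generation, as above. The degenerate possibility $\Prob(N = \infty) > 0$ is easier still: a proper homogeneous fixed point must then be $\delta_0$ because $\sum_{i \geq 1} T_i X_i = \infty$ a.s.\ on $\{N = \infty\}$ unless $X \equiv 0$, and in the inhomogeneous case with $\Prob(C > 0) > 0$ one already has $W^*_1 = \infty$ a.s.\ on $\{N = \infty\}$.
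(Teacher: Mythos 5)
Your proof is correct, and in each of its three components it takes a genuinely different route from the paper. For the uniqueness in (a), the paper quotes Liu's Lemma~3.1 to get $\Fsum(0,T)=\{\delta_0\}$ and then invokes the one-to-one correspondence of Theorem~\ref{Thm:M=exp(-W*)xM_{hom}}, whereas you argue directly from \eqref{eq:SumFP_iterated}: on the a.s.\ finite tree the right-hand side stabilises at $W^*$ for $n$ beyond the (random) height, so a sequence of copies of $X$ converges a.s.\ to $W^*$ — a self-contained argument that bypasses both the cited lemma and the disintegration machinery. For (b) with $C=0$ a.s., the paper again cites Liu (Lemma~1.1), while you reduce \eqref{eq:SumFE_hom} to the scalar fixed-point equation $\psi(t)=f(\psi(t))$ for the offspring generating function $f$ and use continuity of $\psi$; this is sound (even when $\Prob(N=\infty)>0$, the fixed points of $f$ on $[0,1]$ are still exactly $q$ and $1$, and you cover that case separately anyway). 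The most substantial divergence is in (b) with $\Prob(C>0)>0$: the paper truncates $C$ and $N$, applies the Kesten--Stigum theorem and Nerman's conditional Borel--Cantelli results to show $W^*=\sum_n Z_n(c+R_n)=\infty$ on survival, whereas you bound $\Prob(W^*\le K\mid\A_n)$ by $\phi_{Z_n}(K)$ — correctly exploiting that $\mathcal T\cap\N^n$ is $\A_n$-measurable while the level-$n$ colours are i.i.d.\ and independent of $\A_n$ — and conclude via L\'evy's $0$--$1$ law. Your version needs no truncation and no moment assumptions on $C$ or $N$, at the price of being a little longer than the paper's appeal to known results; the dependence caveat you flag (that $C(v)$ and $T(v)$ may be correlated, so the colours on $\mathcal T$ are not globally i.i.d.\ given $\mathcal T$) is exactly the right point to be careful about, and your generation-by-generation conditioning handles it properly.
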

\begin{proof}
(a) If $\E N \leq 1$, $\Fsum(0,T) = \{\delta_0\}$ by Lemma 3.1 in \cite{Liu1998}. On the other hand, $\E N \leq 1$ implies that $N_n := \sum_{|v|=n} \1_{\{L(v)>0\}}$ is a non-trivial critical or subcritical Galton-Watson process. Thus $\{v \in \V: L(v) > 0\}$ is almost surely finite entailing $W^* < \infty$ almost surely regardless of the choice of $C$. This shows (a) in view of the one-to-one correspondence between $\Fsum(0,T)$ and $\Fsum(C,T)$ provided by Theorem \ref{Thm:M=exp(-W*)xM_{hom}}.

\noindent
(b) If $\E N > 1$ but $\Prob(T_i \in \{0,1\} \text{ for all } i \geq 1) = 1$,
then Lemma 1.1 in \cite{Liu1998} shows that $\Fsum(0,T) = \{\delta_0\}$ if $C=0$ almost surely. Now assume that $C > 0$ with positive probability. We claim that $W^* = \infty$ almost surely on $S$, the set of survival. To prove this claim, using truncation if necessary, we can restrict ourselves to the case that $0 < c := \E C < \infty$. Further, we can assume without loss of generality that $N$ is bounded since otherwise one can replace $T$ by $\tilde{T} := (T_1,\ldots,T_m,0,0,\ldots)$ for some sufficiently large $m \in \N$. If one shows that $W_m^* = \infty$ almost surely on $S_m$, where $W^*_m$ is defined as in \eqref{eq:W*} but for the weighted branching process based on the truncated basic sequence $(C,T_1,\ldots,T_m,0,0,\ldots)$ instead of the original sequence $(C,T)$ and $S_m$ is the set of survival of the truncated process, then $W^* \geq W^*_m = \infty$ almost surely on $\bigcup_{m \geq 1} S_m$ which differs from $S$ by a $\Prob$-null set only.
Now assuming that $N$ is bounded above by some (large) constant, we infer that $Z_n := \sum_{|v|=n} L(v)$, $n \geq 0$ is a Galton-Watson process. With $m := \E N > 1$, we infer from the Kesten-Stigum theorem that $Z_n / m^n \to Z_\infty$ almost surely for some random variable $Z_\infty$ satisfying $Z_\infty > 0$ almost surely on $S$. Therefore,
\begin{equation*}
\liminf_{n \to \infty} \frac{Z_{n+1}}{Z_1+\ldots+Z_n} > 0
\quad	\text{almost surely on } S.
\end{equation*}
Now let $R_n := Z_n^{-1} \sum_{|v|=n}L(v)[C(v)-c]$, $n \geq 0$ on $S$ and $0$ otherwise. Then Proposition 4.1 in \cite{Ner1981} implies that
\begin{equation*}
\sum_{n \geq 1} \Prob(|R_n|>\delta | \A_n) < \infty
\quad	\text{almost surely on } S
\end{equation*}
for any $\delta > 0$.
Using Proposition 4.3 in \cite{Ner1981} (a variant of L\'evy's generalised Borel-Cantelli lemma), we infer that $\Prob(|R_n| > \delta \text{ infinitely often}|S) = 0$. Since $\delta > 0$ was arbitrary, we infer that $R_n \to 0$ almost surely on $S$. Consequently,
\begin{eqnarray*}
W^*
& = &
\sum_{n \geq 0} \sum_{|v|=n} L(v) C(v)	\\
& = &
\sum_{n \geq 0} Z_n\left(c + R_n \right)
= \infty
\quad	\text{almost surely on } S.
\end{eqnarray*}
\end{proof}

\section{Necessary conditions for $W^* < \infty$}	\label{sec:nec_conditions}

In view of the discussion in the last section it is natural to assume throughout that
\begin{equation}	\tag{A1}	\label{eq:A1}
\Prob	\big(T \in \{0,1\}^{\N} \big) < 1
\end{equation}
and
\begin{equation}	\tag{A2}	\label{eq:A2}
\E N > 1.
\end{equation}
However, further notation and conditions on the sequence $T$ are needed
to formulate our results. We assume without loss of generality that $N$ satisfies
\begin{equation}\label{eq:T_i>0<=>ileqN}
N = \sum_{i \geq 1} \1_{\{T_i > 0\}}	=	\sup\{i \geq 1: T_i > 0\}.
\end{equation}
We then define the function
\begin{equation}	\label{eq:m}
m:[0,\infty) \to [0,\infty],
\quad	\theta \mapsto	\E \sum_{i=1}^N T_i^{\theta}.
\end{equation}
A condition which is intimately connected with the existence of non-trivial fixed points of the homogeneous equation \eqref{eq:SumFP_hom} is that
\begin{equation}	\tag{A3a}	\label{eq:A3}
\inf_{\theta \in [0,1]} m(\theta) \leq 1.
\end{equation}
A stronger condition that will be used later is that the sequence $T$ possesses a \emph{characteristic exponent} $\alpha\in (0,1]$ which means that
\begin{equation}	\tag{A3b}	\label{eq:A3+}
\text{there is an } \alpha \in (0,1] \text{ such that } m(\beta)>m(\alpha)=1\text{ for }\beta\in [0,\alpha).
\end{equation}
Our next result shows that \eqref{eq:A3} is necessary for both the existence of a non-trivial fixed point of the homogeneous smoothing transform (part (a)) and the existence of a fixed point of the inhomogeneous smoothing transform (part (b)).

\begin{theorem}	\label{Thm:Liu_better}
Assume that \eqref{eq:A1} and \eqref{eq:A2} hold. Then \eqref{eq:A3} is necessary for
\begin{itemize}
	\item[(a)]
		$\Fsum(0,T) \backslash\{\delta_0\}\ne\emptyset$, and it is
		even equivalent if $N < \infty$ almost surely.
	\item[(b)]
		$\Fsum(C,T) \not = \emptyset$ and thus for $W^* < \infty$ almost surely,
		provided that $C>0$ with positive probability.
\end{itemize}
\end{theorem}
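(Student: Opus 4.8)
For part~(a) I plan to argue by contradiction from a hypothetical $\psi\in\Fsum(0,T)\setminus\{\delta_0\}$. Put $\phi:=1-\psi$; since $\psi$ is a Laplace transform, $\phi$ is non-decreasing and concave with $\phi(0)=0$, $\phi>0$ on $(0,\infty)$, $\phi(t)\to0$ as $t\downarrow0$, and, by concavity through the origin together with monotonicity, $\phi(s)\ge\phi(1)(s\wedge1)$ for all $s\ge0$. Starting from the iterate \eqref{eq:SumFP_hom_iterated} and the elementary bound $1-\prod_i(1-x_i)\ge1-e^{-\sum_ix_i}\ge\tfrac12\bigl((\sum_ix_i)\wedge1\bigr)$ for $x_i\in[0,1]$, I would deduce
\[
\phi(t)\ \ge\ \tfrac12\,\E\Bigl[\Bigl(\phi(1)\sum_{|v|=n}(L(v)t\wedge1)\Bigr)\wedge1\Bigr]\qquad(t>0,\ n\in\N_0).
\]
Assuming $m(\theta)>1$ for all $\theta\in[0,1]$, it then suffices to prove that $\sum_{|v|=n}(L(v)t\wedge1)\to\infty$ a.s.\ on the survival set $S$ of the branching random walk $(S(v))$, for each fixed $t>0$, since the displayed inequality would give $\phi(t)\ge\tfrac12\Prob(S)>0$ for all $t>0$, contradicting $\phi(t)\downarrow0$. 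I would get this divergence from a dichotomy on $c_{\max}:=\inf_{\theta>0}\theta^{-1}\log m(\theta)$: when $c_{\max}>0$ the maximal branch weight grows exponentially on $S$, so the level sets $\{|v|=n:L(v)\ge1/t\}$ blow up and already $\#\{|v|=n:L(v)\ge1/t\}\to\infty$; when $c_{\max}\le0$ the hypothesis forces $\theta^{-1}\log m(\theta)>0$ on $(0,1]$, so the minimiser of $\theta\mapsto\theta^{-1}\log m(\theta)$ lies beyond $1$ and Biggins' theorem \cite{Big1977} applies at exponent $1$ (after a mild offspring truncation securing the usual $L\log L$-integrability), giving $m(1)^{-n}\sum_{|v|=n}L(v)\to W$ with $W>0$ on $S$; since here $\max_{|v|=n}L(v)$ stays subexponential, the terms with $L(v)>1/t$ are negligible and $\sum_{|v|=n}(L(v)t\wedge1)\sim t\sum_{|v|=n}L(v)\to\infty$. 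For the asserted equivalence when $N<\infty$ a.s., the converse is the known sufficiency for \eqref{eq:SumFP_hom}: when $\inf_{\theta\in[0,1]}m(\theta)\le1$, a non-trivial fixed point is furnished by the additive (or derivative) martingale of $(S(v))$ at a minimiser of $m$ over $[0,1]$, cf.\ \cite{Liu1998,ABM2010}.

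Part~(b) is not a formal consequence of~(a) — $\delta_0$ always lies in $\Fsum(0,T)$ — so I would argue directly: by Theorem~\ref{Thm:Characterization_of_Fsum_neq_empty} it is enough to show $\Prob(W^*=\infty)>0$ whenever $m(\theta)>1$ for all $\theta\in[0,1]$ and $\Prob(C>0)>0$. Replacing $C$ by $C\wedge b$ (which only lowers $W^*$ and leaves $m$ untouched), I may take $\E C=:c\in(0,\infty)$; and, as in the proof of Proposition~\ref{Prop:EN_leq_1}(b), I would truncate to a sequence $(T_1,\dots,T_L,0,0,\dots)$, possibly capping the $T_i$ as well, with $L$ (and the cap) large enough that the truncated particle process stays supercritical and ``$m>1$ on $[0,1]$'' survives the truncation; then $W^*\ge W^*_{\mathrm{trunc}}$, and the truncated survival set $S$ has positive probability and exhausts the original one up to a null set. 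On $S$ I again split on $c_{\max}$: if $c_{\max}>0$, then $n^{-1}\log\max_{|v|=n}L(v)\to c_{\max}$ and the level sets $\{|v|=n:L(v)\ge a\}$ blow up for every $a$, so, coupling this with the i.i.d.\ weights $C(v)$ and $\Prob(C>0)>0$, a conditional Borel--Cantelli argument of the kind used for Proposition~\ref{Prop:EN_leq_1}(b) yields vertices $v$ along which $L(v)C(v)$ is arbitrarily large, whence $W^*=\infty$ on $S$; if $c_{\max}\le0$, then as in~(a) Biggins' theorem applies at exponent $1$, so $Z_n:=\sum_{|v|=n}L(v)\to\infty$ on $S$, and since $\E[\sum_{|v|=n}L(v)C(v)\mid\A_n]=cZ_n$, Nerman's variant of the Borel--Cantelli lemma (Propositions 4.1 and 4.3 in \cite{Ner1981}), just as in the proof of Proposition~\ref{Prop:EN_leq_1}(b), gives $\sum_{|v|=n}L(v)C(v)=(c+o(1))Z_n\to\infty$ on $S$, so again $W^*=\infty$ on $S$.

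The steps I expect to cost the most are, on the one hand, the truncation arguments: the truncated sequences must be chosen so as to \emph{retain} the defining hypothesis (``$m>1$ on $[0,1]$'', and with it $c_{\max}>0$ or the minimiser being beyond $1$), which requires a Dini/minimax argument since $m^{(\mathrm{trunc})}\uparrow m$ only pointwise; and, on the other hand, the analytic heart, namely the divergences $\sum_{|v|=n}(L(v)t\wedge1)\to\infty$ in~(a) and $Z_n\to\infty$ in~(b). This is where the branching-random-walk input is essential, and the structural point to be exploited is that ``$m>1$ on all of $[0,1]$'' produces \emph{either} a sub-critical exponent at which $m>1$ (making Biggins' additive martingale non-degenerate) \emph{or} else $c_{\max}>0$ (forcing exponential growth of the maximal branch weight) — two regimes that need genuinely different handling, and fitting them together is the main burden. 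A shortcut for~(b) might be to quote instead the tail analysis of \eqref{eq:SumFP} in \cite{JO2010b}.
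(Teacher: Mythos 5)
Your strategy is sound and would yield the theorem, but it takes a genuinely different route from the paper's, and it is worth seeing where the two diverge. The paper's engine is a single functional \emph{inequality} lemma: for any Laplace transform $\varphi$ with $\varphi(t)\le\E\prod_{|v|=n}\varphi(L(v)t)$ it derives, via $\log(1+x)\le x$ and Fatou, the upper bound $\varphi(t)\le\E\exp(-tD_1(L_\infty^*t)W^{(1)})$ with $D_1(t)=t^{-1}(1-\varphi(t))$; your lower bound $1-\psi(t)\ge\tfrac12\E[(\phi(1)\sum_{|v|=n}(L(v)t\wedge1))\wedge1]$ is the dual of this and serves the same purpose. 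The subsequent case analysis is also essentially the same dichotomy in different clothing: your $c_{\max}\le 0$ branch is the paper's Cases 2--3 (Biggins' additive martingale at exponent $1$ after the same Dini-type truncation argument you flag), and your $c_{\max}>0$ branch is the paper's Case 1. In that branch, note that you need the \emph{number} of particles with $L(v)\ge 1/t$ at level $n$ to diverge, which the first-birth theorem alone does not give; a standard subtree/inherited-property argument supplies it, but the paper avoids the issue entirely by bounding $\varphi(t)\le\E\,\varphi(L_n^*t)$-type expressions and letting $n\to\infty$ by dominated convergence, which forces $\varphi(\infty)=1$. The real divergence is part (b). The paper observes that an inhomogeneous fixed point satisfies the \emph{same} subharmonic inequality, since the factor $\exp(-t\sum_{|u|<n}L(u)C(u))$ in \eqref{eq:SumFE_iterated} is at most $1$; hence the lemma gives $\psi\equiv 1$ when \eqref{eq:A3} fails, and Corollary \ref{Cor:bounded_above} then forces $W^*=0$ a.s., contradicting $\Prob(C>0)>0$. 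Your direct proof that $W^*=\infty$ on the survival set (truncation, Biggins, Nerman's Borel--Cantelli) is correct but much longer than necessary --- and in fact your own chain of inequalities in (a) only uses ``$\ge$'' in the direction that survives replacing the equality \eqref{eq:SumFP_hom_iterated} by the inequality coming from \eqref{eq:SumFE_iterated}, so (b) would already be a two-line corollary of your argument for (a) once you replace ``non-trivial homogeneous fixed point'' by ``Laplace transform satisfying the subharmonic inequality that is not identically $1$'' (non-triviality being automatic here since $X\stackrel{d}{\ge}C$). What the paper's formulation buys is precisely this economy: one lemma covers both parts.
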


Liu \cite[Theorem 1.1]{Liu1998} proved that, if $N<\infty$ almost surely, then \eqref{eq:A3} implies the existence of a non-trivial fixed point of \eqref{eq:SumFP_hom}, that is, $\Fsum(0,T) \backslash \{\delta_0\}\ne\emptyset$. He further proved that, if $\E N < \infty$ and $m'(1) := \E \sum_{i=1}^N T_i \log^+ T_i<\infty$, then \eqref{eq:A3} is also necessary for $\Fsum(0,T) \backslash \{\delta_0\}\ne\emptyset$. Theorem \ref{Thm:Liu_better} generalises the last assertion of Liu's theorem by showing necessity of \eqref{eq:A3} under \eqref{eq:A1} and \eqref{eq:A2} only.

It is further worth mentioning that, if $N=\infty$ with positive probability, then $\sup_{\theta \in [0,1]} m(\theta) \leq 1$ is no longer sufficient for $\Fsum(0,T) \backslash \{\delta_0\}\ne\emptyset$. A counterexample is presented at the end of this section.

The proof of Theorem \ref{Thm:Liu_better} is furnished by two subsequent lemmata. For $n \in \N_0$, we define $W_n^{(\theta)} := \sum_{|v|=n} L(v)^{\theta}$ and then $W^{(\theta)} := \liminf_{n \to \infty} W_n^{(\theta)}$ ($\theta \geq 0$). The first lemma provides an estimate in terms of the random variable $W^{(1)}$ for Laplace transforms satisfying the inequality
\begin{equation}	\label{eq:subharmonic}
\varphi(t)	\leq	\E \prod_{|v|=n} \varphi(L(v)t)	\quad	\text{for all } t \geq 0.
\end{equation}

\begin{lemma}	\label{Lem:subharmonic}
Suppose that, as $n\to\infty$, $L_n^* := \sup_{|v|=n} L(v) \to L^*_{\infty} < \infty$ almost surely. Let $\varphi$ be the Laplace transform of a probability distribution on $[0,\infty)$. Put $D_1(t) := t^{-1}(1-\varphi(t))$ for $t>0$ and $D_1(0) := \lim_{t \downarrow 0} D_1(t) \in [0,\infty]$. If $\varphi$ satisfies \eqref{eq:subharmonic}, then
\begin{equation}	\label{eq:crucial_ineq}
\varphi(t)	\leq	\E \exp\Big(-t D_1(L_{\infty}^*t) W^{(1)}\Big)
\end{equation}
for all $t > 0$, where $0 \cdot \infty := 0$.
\end{lemma}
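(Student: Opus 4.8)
The plan is to iterate the subharmonicity inequality \eqref{eq:subharmonic} and then pass to the limit, using the hypothesis $L_n^* \to L_\infty^* < \infty$ to control the arguments $L(v)t$ uniformly. First I would rewrite each factor $\varphi(L(v)t)$ on the right-hand side of \eqref{eq:subharmonic} using the function $D_1$: since $\varphi(s) = 1 - s D_1(s)$ for $s>0$ and $\varphi(0)=1$, we have $\varphi(L(v)t) = 1 - L(v) t\, D_1(L(v)t) \leq 1 - L(v) t\, D_1(L_n^* t)$ for $|v|=n$, provided $D_1$ is nonincreasing on $(0,\infty)$ — which it is, because $D_1(t) = \int_{[0,\infty)} h(tx)\,\mu(dx)$ with $h(u) = u^{-1}(1-e^{-u})$ decreasing and $\mu$ the law behind $\varphi$; I would record this monotonicity as the first step. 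Consequently, using $1 - x \leq e^{-x}$,
\[
\prod_{|v|=n} \varphi(L(v)t) \;\leq\; \prod_{|v|=n} \exp\!\big(-t\, D_1(L_n^* t)\, L(v)\big) \;=\; \exp\!\big(-t\, D_1(L_n^* t)\, W_n^{(1)}\big),
\]
so that \eqref{eq:subharmonic} gives $\varphi(t) \leq \E \exp(-t\, D_1(L_n^* t)\, W_n^{(1)})$ for every $n$.

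Next I would let $n\to\infty$ inside the expectation. Along a subsequence realizing $W^{(1)} = \liminf_n W_n^{(1)}$, we have $W_n^{(1)} \to W^{(1)}$ and $L_n^* t \to L_\infty^* t$ almost surely; by continuity of $D_1$ at the point $L_\infty^* t$ (more precisely, right-continuity together with the monotone convergence $L_n^* \downarrow$ or general convergence — one may need to be slightly careful here, see below) the integrand converges almost surely to $\exp(-t\, D_1(L_\infty^* t)\, W^{(1)})$. Since all integrands lie in $[0,1]$, Fatou's lemma applied to the subsequence yields
\[
\varphi(t) \;\leq\; \liminf_{n\to\infty} \E \exp\!\big(-t\, D_1(L_n^* t)\, W_n^{(1)}\big) \;\leq\; \E \exp\!\big(-t\, D_1(L_\infty^* t)\, W^{(1)}\big),
\]
which is exactly \eqref{eq:crucial_ineq}, with the convention $0\cdot\infty := 0$ handling the case $D_1(L_\infty^* t) = 0$ (then $\varphi \equiv 1$, trivial) or $W^{(1)} = \infty$.

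The main obstacle is the passage to the limit in the argument of $D_1$: although $D_1$ is monotone and hence continuous except at countably many points, the random point $L_\infty^* t$ could in principle be a discontinuity point of $D_1$ with positive probability, and moreover $L_n^* t$ need not approach $L_\infty^* t$ monotonically from a fixed side. I would handle this by noting that for a \emph{fixed} $t>0$ one may shrink $t$ slightly if necessary, or better, use that $D_1$ is actually continuous on all of $(0,\infty)$ when $\mu$ has no atom issues — in fact $D_1(t) = \int h(tx)\mu(dx)$ is continuous on $(0,\infty)$ by dominated convergence since $h$ is bounded and continuous, so there is no genuine discontinuity problem away from $0$; the only delicate point is at $t=0$, but we have restricted to $t>0$ in \eqref{eq:crucial_ineq}, and the value $D_1(0)\in[0,\infty]$ enters only as a limiting object. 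A secondary technical point is justifying that $W_n^{(1)} \to W^{(1)}$ along a common subsequence with $L_n^* \to L_\infty^*$: since $L_n^* \to L_\infty^*$ almost surely along the full sequence by hypothesis, any subsequence works, so one simply picks the subsequence realizing the liminf of $W_n^{(1)}$. With these observations the argument is routine.
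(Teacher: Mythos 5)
Your proof is correct and essentially identical to the paper's: both bound $1-\varphi(L(v)t)\ge L(v)t\,D_1(L_n^*t)$ via the monotonicity of $D_1$ (the paper deduces this from the convexity of $\varphi$; also note your integral representation should read $D_1(t)=\int x\,h(tx)\,\mu(dx)$, which changes nothing), exponentiate to obtain $\varphi(t)\le\E\exp\bigl(-tD_1(L_n^*t)W_n^{(1)}\bigr)$, and let $n\to\infty$ via Fatou. The one step to phrase more carefully is the limit passage: the subsequence realizing $\liminf_n W_n^{(1)}$ is $\omega$-dependent, so rather than ``Fatou along the subsequence'' you should apply the reverse ($\limsup$) form of Fatou's lemma to the full sequence and use the pointwise bound $\liminf_n D_1(L_n^*t)W_n^{(1)}\ge D_1(L_\infty^*t)W^{(1)}$, which follows from $D_1(L_n^*t)\to D_1(L_\infty^*t)$ (by the continuity you established, or by the definition of $D_1(0)$ when $L_\infty^*=0$) together with $\liminf_n W_n^{(1)}=W^{(1)}$ --- exactly as in the paper.
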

\begin{proof}
For this proof, it is stipulated that $\sum_{|v|=n}$ denotes summation over all $|v|=n$ with $L(v) > 0$. Note that, by the convexity of $\varphi$, $D_1$ is decreasing so that $D_1(0)$ actually exists in $[0,\infty]$. By Eq.\ \eqref{eq:subharmonic} and the inequality $\log(1+x) \leq x$ for $x > -1$, we infer that
\begin{align*}
\varphi(t)
&\leq 
\E \exp \Bigg(\sum_{|v|=n} \log(1+\varphi(L(v)t) - 1) \Bigg)	\\
&\leq
\E \exp \Bigg(-\sum_{|v|=n} (1-\varphi(L(v)t)) \Bigg)	\\
& =
\E \exp \Bigg(-t \sum_{|v|=n} L(v) \, \frac{1-\varphi(L(v)t)}{L(v)t} \Bigg)	\\
&\leq
\E \exp \Bigg(-t D_1(L_n^* t) \sum_{|v|=n} L(v) \Bigg).
\end{align*}
Since the integrand in the last line is bounded by $1$, we can use the $\limsup$-version of Fatou's lemma to obtain that
\begin{align*}
\varphi(t)
& \leq
\limsup_{n \to \infty} \E \exp \Bigg(-t D_1(L_n^* t) \sum_{|v|=n} L(v) \Bigg)	\\
& \leq
\E \exp \Bigg(-t \liminf_{n \to \infty} D_1(L_n^* t) \sum_{|v|=n} L(v) \Bigg)\\
&\leq	\E \exp \Bigg(-t D_1(L_{\infty}^* t) W^{(1)} \Bigg)
\end{align*}
with the convention that $0 \cdot \infty = 0$.
\end{proof}

\begin{lemma}	\label{Lem:subharmonic2}
Assume that \eqref{eq:A1}, \eqref{eq:A2} hold and further that $\varphi$ is the Laplace transform of a probability distribution on $[0,\infty)$ satisfying \eqref{eq:subharmonic}. Then for $\varphi$ to be non-constant it is necessary that \eqref{eq:A3} holds.
\end{lemma}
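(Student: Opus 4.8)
The plan is to argue by contradiction. Assume $\varphi$ is non-constant --- so that $\varphi(t)<1$ and (in the notation of Lemma~\ref{Lem:subharmonic}) $D_1(t)>0$ for all $t>0$ --- but that \eqref{eq:A3} fails, i.e.\ $m(\theta)>1$ for every $\theta\in[0,1]$; I will derive a contradiction. The basic observation is that $\varphi$ remains subharmonic, in the sense of \eqref{eq:subharmonic}, for \emph{any} reduced weight sequence $\hat T=(\hat T_i)_{i\ge1}$ with $0\le\hat T_i\le T_i$, simply because $\varphi$ is decreasing. Hence, writing $\hat L(v)$ for the branch weights built from $\hat T$, the process $\Phi_n^{\hat T}(t):=\prod_{|v|=n}\varphi(\hat L(v)t)$ is a $[0,1]$-valued $(\A_n)$-submartingale, converging a.s.\ to some $\Phi_\infty^{\hat T}(t)$ with $\E\Phi_\infty^{\hat T}(t)\ge\Phi_0^{\hat T}(t)=\varphi(t)$, and, by the same pointwise estimate as in the proof of Lemma~\ref{Lem:subharmonic}, $\Phi_n^{\hat T}(t)\le\exp\bigl(-tD_1(\hat L_n^*t)\,\hat W_n^{(1)}\bigr)$, where $\hat L_n^*:=\sup_{|v|=n}\hat L(v)$ and $\hat W_n^{(1)}:=\sum_{|v|=n}\hat L(v)$.

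There is one soft but decisive sub-case: if $\hat L_n^*\to\infty$ a.s.\ on the survival event $\hat S$ of the $\hat T$-branching process, then on $\hat S$ one has $\Phi_n^{\hat T}(t)\le\varphi(\hat L_n^*t)\to\Prob(X=0)$, so $\varphi(t)\le\E\Phi_\infty^{\hat T}(t)\le\Prob(X=0)\Prob(\hat S)+\Prob(\hat S^c)$ for all $t>0$; letting $t\downarrow0$ gives $\Prob(\hat S)\bigl(1-\Prob(X=0)\bigr)\le0$, and since $\Prob(\hat S)>0$ this forces $X\equiv0$ --- a contradiction. So it suffices to reduce to a situation in which either $\hat L_n^*\to\infty$ on $\hat S$ or the additive martingale $\hat W_n^{(1)}/\hat m(1)^n$ converges to a limit positive on $\hat S$. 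To that end I would reduce to bounded weights by taking $\hat T_i:=(T_i\wedge K)\1_{\{i\le m\}}$ with $K$ and $m$ large, so that $\hat m(\theta):=\E\sum_{i\le m}(T_i\wedge K)^\theta\uparrow m(\theta)$; by Dini's theorem on $[0,1]$ (after a preliminary truncation of the number of summands if $\E N=\infty$) this gives $\inf_{[0,1]}\hat m>1$ for $K,m$ large, hence $\hat m(1)>1$, $\E\hat N>1$ (so $\Prob(\hat S)>0$), \eqref{eq:A1} for $\hat T$, and --- $\hat T$ and $\hat N$ being bounded --- the $L\log L$-condition for the exponent-$1$ additive martingale automatically.

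The heart of the matter is the dichotomy for $\hat L_n^*$. By the first-birth/speed theorem for branching random walks, $(\hat L_n^*)^{1/n}\to e^{-\hat\gamma}$ a.s.\ on $\hat S$, with $\hat\gamma=-\inf_{\theta>0}\theta^{-1}\log\hat m(\theta)$. If $\hat\gamma<0$ then $\hat L_n^*\to\infty$ a.s.\ on $\hat S$ and the soft argument applies. Otherwise, I claim that the derivative condition $\hat h(1)<0$ holds, where $\hat h(\theta):=\theta\hat m'(\theta)/\hat m(\theta)-\log\hat m(\theta)$ is non-decreasing with $\hat h(0^+)=-\log\E\hat N<0$. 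Indeed, if $\hat h(1)\ge0$ then the minimiser $\hat\theta^*$ of $\theta\mapsto\theta^{-1}\log\hat m(\theta)$ lies in $(0,1]$, where $\hat m>1$; thus $\inf_{\theta>0}\theta^{-1}\log\hat m(\theta)>0$, i.e.\ $\hat\gamma<0$, contradicting the case under consideration. With $\hat h(1)<0$ and the trivial $L\log L$-condition, Biggins' martingale convergence theorem \cite{Big1977} gives $\hat W_n^{(1)}/\hat m(1)^n\to\hat W_\infty$ a.s.\ with $\hat W_\infty>0$ a.s.\ on $\hat S$, whence $\hat W_n^{(1)}\to\infty$ a.s.\ on $\hat S$ (as $\hat m(1)>1$). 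Since we are not in the case $\hat L_n^*\to\infty$ a.s.\ on $\hat S$, there is a set $F_0\subseteq\hat S$ of positive probability on which $\liminf_n\hat L_n^*<\infty$. Fix $t>0$; for $\omega\in F_0$ choose a subsequence along which $\hat L_{n_k}^*(\omega)\le C(\omega)<\infty$, and then $\Phi_{n_k}^{\hat T}(t)(\omega)\le\exp\bigl(-tD_1(C(\omega)t)\,\hat W_{n_k}^{(1)}(\omega)\bigr)\to0$, because $D_1(C(\omega)t)>0$ and $\hat W_{n_k}^{(1)}(\omega)\to\infty$. Hence $\Phi_\infty^{\hat T}(t)=0$ on $F_0$, so $\varphi(t)\le\E\Phi_\infty^{\hat T}(t)\le1-\Prob(F_0)$ for every $t>0$; letting $t\downarrow0$ forces $\Prob(F_0)=0$, a contradiction. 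In all cases we contradict the standing assumption, so \eqref{eq:A3} holds.

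The step I expect to be the main obstacle is the bookkeeping around the non-convergence of $\hat L_n^*$: one must ensure that, precisely when $\hat L_n^*$ fails to tend to infinity, there is a positive-probability event on which $\hat L_n^*$ stays bounded along a subsequence and on which $\hat W_n^{(1)}$ blows up. The structural fact that makes this go through is that failure of the derivative condition at exponent $1$ would force $\hat L_n^*\to\infty$; a $0$-$1$ law for the deterministic limit $\lim_n(\hat L_n^*)^{1/n}$ then handles the borderline $\hat\gamma=0$, and the case $\E N=\infty$ is absorbed by a preliminary truncation of the number of summands. The remaining ingredients --- the submartingale estimate, the Dini/truncation step, and Biggins' theorem --- are routine.
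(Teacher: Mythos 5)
Your argument is correct and is built from the same pillars as the paper's proof: truncation of $(T_i)$ to a bounded sequence whose mean function stays above $1$ on $[0,1]$ (your Dini step is exactly the paper's compactness/subsequence argument for \eqref{eq:infam(theta)>1}), the pathwise estimate of Lemma~\ref{Lem:subharmonic}, and a dichotomy between ``$\hat L_n^*\to\infty$ on survival'' (disposed of by a soft dominated-convergence argument, as in the paper's Case~1) and ``the additive martingale at exponent $1$ is non-degenerate, hence $\hat W_n^{(1)}\to\infty$ on survival'' (disposed of via the $D_1$-bound, as in the paper's Cases~2 and~3). Where you genuinely differ is in how the dichotomy is obtained. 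The paper inspects the shape of the truncated mean function: it reads off $\am(1)>1$ and $\am'(1)<0$ in its Cases 2 and 3, which implies Biggins' condition, and in Case 3 it additionally uses that $T_i\le 1$ makes $L_n^*$ monotone. You instead invoke the first-birth/speed theorem: if $\hat L_n^*$ fails to diverge on survival, the speed $-\inf_{\theta>0}\theta^{-1}\log\hat m(\theta)$ must be $\ge 0$, and since $\inf_{[0,1]}\hat m>1$ forces $\inf_{(0,1]}\theta^{-1}\log\hat m(\theta)>0$, the minimiser of $\theta^{-1}\log\hat m$ must lie beyond $1$, i.e.\ $\hat h(1)<0$, which is precisely Biggins' mean condition. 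This is a more unified organisation, and your treatment of the residual event $\{\liminf_n\hat L_n^*<\infty\}$ (extracting a bounded subsequence of $\hat L_n^*$ on a set of positive probability and letting $\hat W_{n_k}^{(1)}\to\infty$ there) is somewhat more robust than the paper's reliance on monotonicity of $L_n^*$. Two points should be made explicit in a final write-up: (i) the version of the first-birth theorem used, namely almost sure convergence of $n^{-1}\inf_{|v|=n}S(v)$ on survival together with the variational formula for the limit, which is available for the truncated walk because its weights and offspring numbers are bounded; and (ii) the log-convexity of $\hat m$, which is what makes $\hat h$ non-decreasing and justifies the minimiser argument (note that the direct bound $\theta^{-1}\log\hat m(\theta)\ge\log(1+\varepsilon)$ on $(0,1]$ already rules out the infimum being attained near $0$, so the value of $\hat h(0^+)$ is not actually needed). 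Neither point is a gap.
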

\begin{proof}
Assuming $\inf_{0 \leq \theta \leq 1} m(\theta) > 1$ we must show that $\varphi(t) = 1$ for all $t \geq 0$. To this end, we use a truncation argument. For $a \in \N$, define $\aT := (\aTi)_{i \geq 1}$ with
\begin{equation*}
\aTi :=	\begin{cases}
					\min\{a,T_i\}	&	\text{if } i \leq a,	\\
					0		&	\text{otherwise.}
					\end{cases}
\end{equation*}
The quantities $\am$, $\aN$, $\aW^{(\theta)}$, etc. are supposed to be defined as their counterparts without left subscript $a$ but w.r.t.\ $\aT$ instead of $T$. Then $\am$ is finite and convex on $[0,\infty)$ and, by the monotone convergence theorem, $\am(\theta) \uparrow m(\theta)$ as $a \to \infty$ for all $\theta \geq 0$. We claim that even
\begin{equation}	\label{eq:infam(theta)>1}
\inf_{0 \leq \theta \leq 1} \am(\theta) > 1
\end{equation}
holds true for all sufficiently large $a$. \\
To prove this claim suppose the contrary, \textit{i.e.}\ $\inf_{0 \leq \theta \leq 1} \am(\theta)\leq 1$ for all $a \in \N$. It follows that, for each $a \in \N$, there exists $\theta_a \in [0,1]$ such that $\am(\theta_a) \leq 1$. Let $\theta^*$ denote the limit of a convergent subsequence of the $\theta_a$. Then $\am(\theta^*)\uparrow m(\theta^{*})>1$ ensures $\am(\theta^*) > 1$ for some sufficiently large $a$ and therefore, by continuity, $\am(\theta) > 1$ for all $\theta$ in $(\theta^*-\varepsilon,\theta^*+\varepsilon)$ for some $\varepsilon > 0$. On the other hand, 
$|\theta_b - \theta^*| < \varepsilon$ must hold for some $b>a$ by the definition of $\theta^{*}$ which then leads to the impossible conclusion that $1 \geq\,\! _{b}m(\theta_b) \geq \am(\theta_b) > 1$. Therefore our claim must be true, that is, \eqref{eq:infam(theta)>1} holds for all sufficiently large $a$. In what follows, we fix any such $a>0$ and proceed by  distinguishing three cases:

\vspace{0.1cm}
\noindent
\textsc{Case 1}.	$\am(\theta) > 1$ for all $\theta > 0$ and $\Prob(T_i > 1) > 0$ for some $i \in \N$.	\\
In this case we have $m(\theta) > 1$ for all $\theta \geq 0$ and $\Prob(S(i) < 0) > 0$ for some $i \in \N$, where $S(v)=-\log L(v)$ should be recalled. From Corollary 1 in \cite{Big1998}, we then infer that $0>\lim_{n \to \infty} n^{-1}\inf_{|v|=n} S(v)=-\lim_{n \to \infty} n^{-1}\log L_{n}^{*}$ and thus $\lim_{n \to \infty} L_n^* = \infty$ almost surely on $S$, the set of survival of the BRW with positions $S(v)$. Note that $1-q := \Prob(S) > 0$, for $\E N = m(0) > 1$. Now for any $t > 0$, the dominated convergence theorem yields
\begin{equation*}
\varphi(t)	\leq	\liminf_{n \to \infty} \E \prod_{|v|=n} \varphi(L(v)t)
\leq	\lim_{n \to \infty} \E \varphi(L_n^*t)	=	q + (1-q)\varphi(\infty).
\end{equation*}
But this can only be true if $\varphi(0) = \varphi(\infty) = 1$, for $\varphi$ is continuous on $[0,\infty)$. Hence the monotonicity of $\varphi$ implies $\varphi(t) = 1$ for all $t\geq0$.

\vspace{0.1cm}
\noindent
\textsc{Case 2}.	$\am(\kappa) \leq 1$ for some $\kappa > 1$.	\\
Let $\kappa$ be the minimal positive number with $\am(\kappa) \leq 1$. Then $\kappa > 1$ and $\am$ is a strictly decreasing function on $[0,\kappa]$, giving $\am(1) > 1$ and $\am'(1) < 0$. Since $\aW^{(1)}_1 \leq a^2$, an appeal to Biggins' martingale convergence theorem (see \textit{e.g.}\ \cite{Lyo1997}) yields that
\begin{equation*}
\frac{\aW_n^{(1)}}{\am(1)^n} \to \aW^{(1)}	\quad	\text{almost surely,}
\end{equation*}
where $\aW^{(1)}$ is almost surely positive on $_aS$, the set of survival of the weighted branching process with weights $\aL(v)$, $v \in \V$. Consequently, $\aW^{(1)} = \lim_{n \to \infty} \aW^{(1)}_n = \infty$ almost surely on $_aS$. Since $\E \aN = \am(0) > 1$, we have that $1-\,\!_aq := \Prob(\,\!_aS) > 0$. Now observe that $\varphi$ satisfies inequality \eqref{eq:subharmonic} for the weight family $(\aL(v))_{v \in \V}$ and that $\aL_n^* \to 0$ almost surely by Theorem 3 in \cite{Big1998}. Hence the assumptions of Lemma \ref{Lem:subharmonic} are fulfilled and the lemma yields
\begin{equation*}
\varphi(t)	\leq	\E \exp(-t D_1(0) \aW^{(1)})	\quad	(t > 0)
\end{equation*}
with $D_1(0) = \lim_{s \downarrow 0} s^{-1}(1-\varphi(s))$. This inequality in combination with $\aW^{(1)} = \infty$ on $_aS$ implies that either $D_1(0) = 0$ or $\varphi(t) \leq \,\!_aq < 1$ for all $t > 0$. Since $\varphi$ is continuous, the first alternative must prevail and $\varphi(t) = 1$ for all $t \geq 0$.

\vspace{0.1cm}
\noindent
\textsc{Case 3}. $\am(\theta) > 1$ for all $\theta \geq 0$ and $\Prob(T_i \leq 1 \text{ for all } i \geq 1) = 1$.	\\
In this case, as in the previous one, $\am(1) > 1$, $\am'(1) < 0$, $\aW_1^{(1)} \leq a^2$ and, thus, $\aW^{(1)}=\infty$ on $\aS$. We want to argue as in the second case but we cannot conclude here from Theorem 3 in \cite{Big1998} that $\aL_n^* \to 0$ almost surely. However, $\aL_n^*$ is now decreasing in $n$ and therefore almost surely convergent to some random variable $\aL^*_{\infty}$ taking values in $[0,1]$. Again, the assumptions of Lemma \ref{Lem:subharmonic} are fulfilled and we conclude that
\begin{equation*}
\varphi(t)	\leq	\E \exp(-t D_1(\aL_{\infty}^*t) \aW^{(1)})
						\leq	\E \exp(-t D_1(t) \aW^{(1)})
\end{equation*}
for all $t>0$. Now suppose $\varphi(t_0) < 1$ for some $t_0>0$. Then $D_1(t) > 0$ and thus $\varphi(t) \leq \,\!_aq < 1$ for all $t > 0$, which contradicts the continuity of $\varphi$ at $0$. Hence $\varphi(t) = 1$ for all $t \geq 0$ must hold.
\end{proof}

Lemma \ref{Lem:subharmonic2} almost immediately implies Theorem \ref{Thm:Liu_better}:
\begin{proof}[Proof of Theorem \ref{Thm:Liu_better}]
Any $\varphi \in \Fsum(0,T)$ satisfies \eqref{eq:SumFE_hom} and, particularly, \eqref{eq:subharmonic}. Hence Lemma \ref{Lem:subharmonic2} implies that, if \eqref{eq:A3} fails, then $\varphi(t)=1$ for all $t \geq 0$ and, therefore, $\Fsum(0,T) = \{\delta_0\}$. By contraposition, \eqref{eq:A3} follows from $\Fsum(0,T) \backslash \{\delta_0\}\ne\emptyset$. This shows the first part of assertion (a). In order to conclude its second part we need to show that, provided $N < \infty$ almost surely, \eqref{eq:A3} is also sufficient for $\Fsum(0,T) \backslash \{\delta_0\}\ne\emptyset$. This is Theorem 1.1 in \cite{Liu1998}.

\noindent
To prove part (b) assume that $C>0$ with positive probability and note that any $\psi \in \Fsum(C,T)$ satisfies \eqref{eq:SumFE} and, therefore, also \eqref{eq:subharmonic}. By Lemma \ref{Lem:subharmonic}, if \eqref{eq:A3} fails, then $\psi(t)=1$ for all $t \geq 0$. Since $\psi(t) \leq \E e^{-tW^*}$ by Lemma \ref{Lem:Disintegration} and Corollary \ref{Cor:bounded_above}, we infer $W^*=0$ almost surely which contradicts the assumption that $C>0$ with positive probability. Thus, $\Fsum(C,T) \ne\emptyset$ implies \eqref{eq:A3}.
\end{proof}

Let us finally give an example which shows that assumption \eqref{eq:A3} is not sufficient for $\Fsum(0,T) \backslash \{\delta_0\}\ne\emptyset$ to hold when $N=\infty$ is allowed.

\begin{example}	\label{Exa:infm(theta)leq1_not_sufficient}
Let $T_i := c/((i+1) \log^2(i+1))$, $i \geq 1$ for some $c > 0$. Then $m(\theta) < \infty$ iff $\theta \geq 1$. Now we can choose $c>0$ so small that $m(1) < 1$. Thus, \eqref{eq:A1}, \eqref{eq:A2} and \eqref{eq:A3} are fulfilled. But $\Fsum(0,T) = \{\delta_0\}$ by Proposition 5.1 in \cite{AR2005}.
\end{example}

\section{Sufficient conditions for $\Fsum(C,T) \not = \emptyset$}	\label{sec:suf_conditions}

As in the previous section we assume validity of \eqref{eq:A1} and \eqref{eq:A2}. The following theorem provides sufficient conditions for $\Fsum(C,T) \not = \emptyset$. We focus on the case that $C > 0$ with positive probability since there is always the trivial solution $\delta_0$ otherwise and the case of non-trivial solutions has been investigated by Liu \cite[Theorem 1.1]{Liu1998}.

\begin{theorem}	\label{Thm:suf_conditions}
Suppose that \eqref{eq:A1} and \eqref{eq:A2} hold true. Then the following conditions are sufficient for $W^* < \infty$ and, therefore, for $\Fsum(C,T) \not = \emptyset$.
\begin{itemize}
	\item[(i)]
		$m(\beta) < 1$ and $\E C^{\beta} < \infty$ for some $0 < \beta \leq 1$.
	\item[(ii)]
		$m(\alpha) = 1$ for some $\alpha \in (0,1/5)$, $\E C < \infty$,
		$1 < m(\theta) < \infty$ for all $\alpha\ne\theta \in [-\varepsilon, \alpha+\varepsilon]$
		and some $\varepsilon > 0$, and $\E N^{1+\delta} < \infty$ for some $\delta > 0$.
\end{itemize}
\end{theorem}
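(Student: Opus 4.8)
The two parts require quite different arguments. \emph{Part (i)} is a first‑moment estimate. Since $0<\beta\le1$, subadditivity of $x\mapsto x^{\beta}$ gives $(W^{*})^{\beta}\le\sum_{v\in\V}L(v)^{\beta}C(v)^{\beta}$; as $L(v)$ depends only on the edge weights attached at the proper ancestors of $v$, it is independent of the vertex weight $C(v)$, so $\E[L(v)^{\beta}C(v)^{\beta}]=\E L(v)^{\beta}\cdot\E C^{\beta}$. Summing over $|v|=n$ and using $\E\sum_{|v|=n}L(v)^{\beta}=m(\beta)^{n}$ (finite because $m(\beta)<\infty$), I would obtain $\E(W^{*})^{\beta}\le\E C^{\beta}\sum_{n\ge0}m(\beta)^{n}=\E C^{\beta}/(1-m(\beta))<\infty$, whence $W^{*}<\infty$ a.s.\ and $\Fsum(C,T)\ne\emptyset$ by Theorem~\ref{Thm:Characterization_of_Fsum_neq_empty}.

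\emph{Part (ii)} is the boundary case: the hypotheses make $m$ finite and analytic near $\alpha$ with $m(\alpha)=1$ a strict local, hence global, minimum, so $m'(\alpha)=0$ and $m''(\alpha)\in(0,\infty)$. Two preliminary facts form the backbone. First, since $m(\alpha)=1$ the intrinsic martingale $W_{n}^{(\alpha)}=\sum_{|v|=n}L(v)^{\alpha}$ is non‑negative, hence a.s.\ convergent and a.s.\ bounded, so $\sup_{v\in\V}L(v)^{\alpha}\le\sup_{n}W_{n}^{(\alpha)}<\infty$ a.s.\ and the branching random walk $S(v)=-\log L(v)$ is a.s.\ bounded below. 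Second, by the many‑to‑one lemma, size‑biasing each generation by $L(\cdot)^{\alpha}$ (admissible since $m(\alpha)=1$) singles out a spine along which the positions form a random walk $(S_{n})_{n\ge0}$ with $S_{0}=0$, mean $\widehat{\E}S_{1}=-m'(\alpha)=0$ and variance $m''(\alpha)<\infty$, and $\E\sum_{|v|=n}L(v)^{\alpha}g(S(v|0),\dots,S(v|n))=\widehat{\E}\,g(S_{0},\dots,S_{n})$ for non‑negative $g$.

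Then I would fix $a>0$ large and split $W^{*}$ along the first‑passage stopping line $\mathcal{T}_{a}=\{v\in\V:S(v)<-a,\ S(v|k)\ge-a\text{ for }k<|v|\}$, grouping vertices according to whether the whole lineage stays $\ge-a$ or descends below a (unique) $u\in\mathcal{T}_{a}$:
\[
W^{*}=W^{*}_{a}+\sum_{u\in\mathcal{T}_{a}}L(u)\,[W^{*}]_{u},\qquad W^{*}_{a}:=\sum_{v:\,S(v|k)\ge-a\ \forall k\le|v|}L(v)C(v).
\]
By the strong branching property along stopping lines, $W^{*}_{a}$ and $(L(u))_{u\in\mathcal{T}_{a}}$ are measurable with respect to the history up to $\mathcal{T}_{a}$, and the $[W^{*}]_{u}$ are i.i.d.\ copies of $W^{*}$ independent of it. Comparing $W_{n}^{(\alpha)}$ with the part of it surviving above $-a$ and letting $n\to\infty$ gives $\E\sum_{u\in\mathcal{T}_{a}}L(u)^{\alpha}\le1$, and since $L(u)>e^{a}$ on $\mathcal{T}_{a}$ this yields $\E\,\#\mathcal{T}_{a}\le e^{-\alpha a}<1$; in particular $\mathcal{T}_{a}$ is a.s.\ finite. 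For $W^{*}_{a}$, independence of $C(v)$ from the path weights and the many‑to‑one identity give $\E W^{*}_{a}=\E C\sum_{n\ge0}\widehat{\E}[e^{-(1-\alpha)S_{n}};\,\min_{0\le k\le n}S_{k}\ge-a]$, and bounding the $n$‑th term by $c_{a}n^{-3/2}$ (see below) makes $\E W^{*}_{a}<\infty$, so $W^{*}_{a}<\infty$ a.s. Since $(S(v))$ is a.s.\ bounded below, $\Prob(\mathcal{T}_{a}=\varnothing)=\Prob(\inf_{v}S(v)\ge-a)>0$ for large $a$, and on that event $W^{*}=W^{*}_{a}<\infty$; thus $p:=\Prob(W^{*}<\infty)>0$. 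Conditioning the displayed identity on the history up to $\mathcal{T}_{a}$ gives $p=\E[p^{\#\mathcal{T}_{a}}]=f(p)$ with $f$ the generating function of $\#\mathcal{T}_{a}$; convexity of $f$ with $f(1)=1$ and $f'(1^{-})=\E\,\#\mathcal{T}_{a}<1$ forces $f(s)>s$ on $[0,1)$, hence $p=1$ and $W^{*}<\infty$ a.s., so $\Fsum(C,T)\ne\emptyset$ by Theorem~\ref{Thm:Characterization_of_Fsum_neq_empty}.

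The crux is the bound $\widehat{\E}[e^{-(1-\alpha)S_{n}};\min_{k\le n}S_{k}\ge-a]=O(n^{-3/2})$: one factor $n^{-1/2}$ comes from the centred walk staying above $-a$, the second from the fact that, conditionally on this, $S_{n}$ is of order $\sqrt n$, so that $e^{-(1-\alpha)S_{n}}$ is typically of order $n^{-1/2}$. Turning this heuristic into a rigorous bound calls for renewal/local‑limit estimates for a mean‑zero walk conditioned to stay above a level, and it is here — together with the control of the size‑biased spine and of the thinning of the tree recorded in $W_{n}^{(\alpha)}\to0$ — that the quantitative hypotheses $m<\infty$ on $[-\varepsilon,\alpha+\varepsilon]$, $\E N^{1+\delta}<\infty$ and $\alpha\in(0,1/5)$ are genuinely used. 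This is the step I would expect to require the most care.
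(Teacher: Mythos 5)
Your part (i) is exactly the paper's argument (subadditivity of $x\mapsto x^{\beta}$, independence of $L(v)$ and $C(v)$, geometric series with ratio $m(\beta)<1$), so there is nothing to add there. For part (ii) you take a genuinely different route. The paper argues crudely but quickly: Theorem 1.2 of \cite{HS2009} — whose hypotheses are precisely $1<m(\theta)<\infty$ on $[-\varepsilon,\alpha+\varepsilon]\setminus\{\alpha\}$ and $\E N^{1+\delta}<\infty$ — gives $\inf_{|v|=n}S(v)\geq \log n/(2c\alpha)$ eventually a.s.\ on survival for any $c>1$, hence $(L_n^*)^{1-\alpha}\leq n^{(1-\alpha)(-1/(2c\alpha))}$, and a Markov/Borel--Cantelli bound gives $\sum_{|v|=n}L(v)^{\alpha}C(v)\leq n^{c}$ eventually a.s.; the product is summable in $n$ exactly when $\alpha<1/(1+2c(1+c))$, which is where $1/5$ comes from. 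Your scheme (first-passage stopping line $\mathcal{T}_a$ below level $-a$, the bound $\E\,\#\mathcal{T}_a\leq e^{-\alpha a}<1$ via the stopped $\alpha$-martingale, the many-to-one computation of $\E W_a^*$, and the bootstrap $p=f(p)$ with $f'(1^-)<1$ forcing $p=1$) is architecturally sound and, notably, nowhere uses $\alpha<1/5$ or $\E N^{1+\delta}<\infty$ — only $\alpha<1$, $\E C<\infty$ and finiteness of $m$ in a neighbourhood of $\alpha$. So you have misattributed where those hypotheses are ``genuinely used'': in the paper's proof they are needed to invoke \cite{HS2009}; in yours they would be superfluous, and a completed version of your argument would prove a strictly stronger statement.

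The one genuine gap is the crux you flag yourself, and the heuristic you offer for it is wrong as stated. On $\{\min_{k\leq n}S_k\geq -a\}$ the conditioned walk typically ends at height of order $\sqrt{n}$, so $e^{-(1-\alpha)S_n}$ is typically of order $e^{-c\sqrt{n}}$, not $n^{-1/2}$; the expectation $\widehat{\E}\,[e^{-(1-\alpha)S_n};\min_{k\leq n}S_k\geq -a]$ is instead dominated by the atypical paths with $S_n=O(1)$, and the $n^{-3/2}$ comes from a ballot-type local estimate of the form $\widehat{\Prob}(\min_{k\leq n}S_k\geq -a,\ S_n\leq -a+j)\leq C(1+a)(1+a+j)\,n^{-3/2}$ for a centred finite-variance walk, summed against $e^{-(1-\alpha)j}$ (here $1-\alpha>0$ is essential). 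This Kozlov-type estimate is standard but is the entire analytic content of your proof and must be supplied; any bound summable in $n$ would in fact suffice. Until it is, part (ii) is an outline rather than a proof — a correct and more powerful outline than the paper's argument, but an outline nonetheless.
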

Condition (i) is Lemma 4.4(i) in \cite{JO2010b}. For the reader's convenience, we provide a proof here.
\begin{proof}
Assume first that (i) holds. Then, using the subadditivity of $x \mapsto x^{\beta}$ ($x \geq 0$), we infer
\begin{eqnarray*}
\E (W^*)^{\beta}
& = &
\E \left[ \sum_{n \geq 0} \sum_{|v|=n} L(v) C(v) \right]^{\beta}	\\
& \leq &
\E \sum_{n \geq 0} \sum_{|v|=n} L(v)^{\beta} C(v)^{\beta}	
= \E C^{\beta} \, \sum_{n \geq 0} m(\beta)^n
< \infty.
\end{eqnarray*}
Now assume that (ii) holds. Then Theorem 1.2 in \cite{HS2009} guarantees that
\begin{eqnarray}
\frac{1}{2 \alpha}
& = &
\liminf_{n \to \infty} \frac{1}{\log n} \inf_{|v|=n} S(v)	\label{eq:HSliminf}	\\
& \leq &
\limsup_{n \to \infty} \frac{1}{\log n} \inf_{|v|=n} S(v)
= \frac{3}{2 \alpha}
\quad	\text{almost surely on } S.	\label{eq:HSlimsup}
\end{eqnarray}
In particular, for any $c>1$
\begin{equation*}
L_n^*	= \sup_{|v|=n} L(v) =  \exp\bigg(-\inf_{|v|=n} S(v)\bigg)
\leq e^{-\log n/(2 c \alpha)}
= n^{-1/(2c\alpha)}
\end{equation*}
for all sufficiently large $n$ almost surely on $S$. Moreover, by Markov's inequality,
\begin{equation*}
\Prob\Bigg(\sum_{|v|=n} L(v)^{\alpha} C(v) > n^c \Bigg)
\leq n^{-c} \E \sum_{|v|=n} L(v)^{\alpha} C(v)
= n^{-c} \E C
\end{equation*}
for all $n \geq 0$. Thus, by an application of the Borel-Cantelli Lemma,
\begin{equation*}
\Prob\Bigg(\sum_{|v|=n} L(v)^{\alpha} C(v) > n^c \text{ infinitely often} \Bigg)
= 0.
\end{equation*}
Further,
\begin{eqnarray*}
W^*
& = &
\sum_{n \geq 0} \sum_{|v|=n} L(v) C(v)
\leq \sum_{n \geq 0} (L_n^*)^{1-\alpha} \sum_{|v|=n} L(v)^{\alpha} C(v).
\end{eqnarray*}
A combination of the previously collected facts finally yields
\begin{equation*}
(L_n^*)^{1-\alpha} \sum_{|v|=n} L(v)^{\alpha} C(v)
\leq n^{1/(2c)-1/(2c\alpha)+c}
\end{equation*}
for sufficiently large $n$ almost surely. Hence, $W^*$ is almost surely finite if $\alpha < 1/(1+2c(1+c))$ which can be arranged by choosing $c$ sufficiently close to $1$, for $\alpha < 1/5$ by assumption.
\end{proof}

\section{Description of $\Fsum(C,T)$}	\label{sec:description}

In view of Theorem \ref{Thm:Liu_better} it is natural to determine $\Fsum(C,T)$ under
\eqref{eq:A1}, \eqref{eq:A2} and \eqref{eq:A3}. On the other hand, the last condition does not guarantee a universal answer as seen by Example \ref{Exa:infm(theta)leq1_not_sufficient}. Therefore, we replace \eqref{eq:A3} by the stronger counterpart \eqref{eq:A3+} hereafter. In fact, under a weak additional assumption, we can then determine $\Fsum(C,T)$. This assumption is that
\begin{equation}	\tag{A4}	\label{eq:A4}
\eqref{eq:A4a} \text{ or } \eqref{eq:A4b} \text{ holds}
\end{equation}
where \eqref{eq:A4a} and \eqref{eq:A4b} are as follows:
\begin{equation}	\tag{A4a}	\label{eq:A4a}
\E \sum_{i \geq 1} T_i^{\alpha} \log T_i \in (-\infty,0)
\text{ and }
\E \Big(\sum_{i \geq 1} T_i^{\alpha}\Big) \log^+ \Big(\sum_{i \geq 1} T_i^{\alpha}\Big) < \infty;
\end{equation}
and
\begin{equation}	\tag{A4b}	\label{eq:A4b}
N < \infty	\text{ almost surely	and	there is some } \theta<\alpha \text{ satisfying }
m(\theta) < \infty.
\end{equation}
Before we proceed with our theorem and thus provide a complete answer to Question (2) under \eqref{eq:A1}, \eqref{eq:A2}, \eqref{eq:A3+} and \eqref{eq:A4}, we need to introduce some notation necessitated by the fact that the closed multiplicative subgroup of $\R_{>0}$ generated by the positive $T_i$ may either be of the form $s^{\Z}$ for some $s > 1$ or be $\R_{>0}$ itself. We define $\G(T)$ as the intersection of all closed multiplicative groups $G \subseteq \R_{>0}$ such that $\Prob(T_i \in G \cup \{0\} \text{ for all } i \geq 1) = 1$. Let $r:=s>1$ if $\G(T) = s^{\Z}$ and $r:=1$ if $\G(T) = \R_{>0}$. Further, let $\mathfrak{P}_r$ denote the set of positive constant functions on $\R_{>0}$ if $r=1$ and let $\mathfrak{P}_r$ be the set of positive, multiplicatively $r$-periodic functions with a completely monotone derivative if $r>1$.

\begin{theorem}	\label{Thm:characterization_of_Fsum}
If \eqref{eq:A1}, \eqref{eq:A2}, \eqref{eq:A3+} and \eqref{eq:A4} hold and $W^*<\infty$ almost surely, then there exists a $\bT$-measurable random variable $W$ satisfying
\begin{equation}
W	=	\sum_{i \geq 1} T_i^{\alpha} [W]_i	\quad	\text{almost surely}
\end{equation}
such that the set of Laplace transforms $\psi$ of solutions to \eqref{eq:SumFP} is given by the family
\begin{equation*}
\psi(t)
=	\E \exp\Big(-tW^* - h(t)t^{\alpha} W\Big),	\quad 	t \geq 0,
\end{equation*}
parametrised by $h \in \mathfrak{P}_r$.
\end{theorem}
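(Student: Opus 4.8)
The plan is to exploit the one-to-one correspondence established in Theorem~\ref{Thm:M=exp(-W*)xM_{hom}}. By that result, a Laplace transform $\psi$ lies in $\Fsum(C,T)$ if and only if its disintegration $M$ factorises as $M(t) = \exp(-tW^*)\,M_{\hom}(t)$, where $M_{\hom}$ is the disintegration of some $\varphi \in \Fsum(0,T)$, and conversely every such product (after taking expectations) produces an element of $\Fsum(C,T)$. Consequently, describing $\Fsum(C,T)$ reduces entirely to describing $\Fsum(0,T)$ together with its disintegrations. So the first step is to invoke the known characterisation of the homogeneous fixed points under \eqref{eq:A1}, \eqref{eq:A2}, \eqref{eq:A3+} and \eqref{eq:A4}: by the theory developed in \cite{ABM2010} (and earlier \cite{Big1977,DL1983,Liu1998}), the nontrivial elements $\varphi \in \Fsum(0,T)$ are exactly those of the form $\varphi(t) = \E\exp(-h(t)t^{\alpha}W)$ with $h \in \mathfrak{P}_r$ and $W$ the limit of the Biggins (derivative) martingale $W_n = \sum_{|v|=n} L(v)^{\alpha}$, normalised appropriately, which satisfies $W = \sum_{i\ge1} T_i^{\alpha}[W]_i$ almost surely; the assumptions \eqref{eq:A3+} and \eqref{eq:A4} are precisely what guarantees the existence and the characterisation of this martingale limit (the $\mathfrak{P}_r$-parametrisation accounting for the lattice vs.\ non-lattice dichotomy encoded in $r$). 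The trivial solution $\delta_0$ corresponds to $h \equiv 0$ (or $W \equiv 0$), which is subsumed in the stated family.

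The second step is to compute the disintegration $M_{\hom}$ of such a homogeneous $\varphi$ explicitly. From the martingale $M_{\hom,n}(t) = \prod_{|v|=n}\varphi(L(v)t) = \prod_{|v|=n}\E\exp(-h(L(v)t)(L(v)t)^{\alpha}W)$ one should identify the almost sure limit. The heuristic is that $h$, being $r$-periodic, becomes (asymptotically, along the branching random walk) constant in the relevant sense, so $h(L(v)t) \to h(t)$ in the averaged sense governed by the renewal structure of the branching random walk, and $\sum_{|v|=n} L(v)^{\alpha}[W]_v \to W$ by the martingale property; this yields $M_{\hom}(t) = \exp(-h(t)t^{\alpha}W)$ almost surely, where now $W$ is the $\A_\infty$-measurable (indeed $\bT$-measurable) limit. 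This identification is exactly the content of the characterisation results for the homogeneous multiplicative martingale, so it may be cited rather than reproved; alternatively one verifies directly that $\Phi(t) := \exp(-h(t)t^{\alpha}W)$ satisfies the disintegrated homogeneous equation $\Phi(t) = \prod_{|v|=n}[\Phi]_v(L(v)t)$ a.s.\ (using $W = \sum_{|v|=n}L(v)^{\alpha}[W]_v$ and the $r$-periodicity of $h$ together with $L(v) \in \G(T)$) and has expectation $\varphi$, whence by the uniqueness in Lemma~\ref{Lem:Disintegration} it is the disintegration of $\varphi$.

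The third step merely assembles the two: plug $M_{\hom}(t) = \exp(-h(t)t^{\alpha}W)$ into \eqref{eq:M=exp(-W*)xM_{hom}} to get $M(t) = \exp(-tW^* - h(t)t^{\alpha}W)$ almost surely, and take expectations to obtain $\psi(t) = \E\exp(-tW^* - h(t)t^{\alpha}W)$. Theorem~\ref{Thm:M=exp(-W*)xM_{hom}} guarantees that every such $\psi$ is a genuine element of $\Fsum(C,T)$ (the converse direction), and that no other solutions exist (the forward direction, since every disintegration must have this form). One should also note that $W$ can be taken $\bT$-measurable because the Biggins martingale $W_n = \sum_{|v|=n}L(v)^{\alpha}$ depends on $\bC\otimes\bT$ only through the branch weights $L(v)$, i.e.\ only through $\bT$; its a.s.\ limit inherits this measurability. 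Finally one records that $W^*$ and $W$ are jointly the relevant objects: $W^*$ is $\A_\infty$-measurable (a function of $\bC\otimes\bT$) and the pair $(W^*, W)$ governs the whole family.

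The main obstacle I anticipate is the precise identification of $M_{\hom}$ and, relatedly, pinning down exactly which hypotheses are needed for the homogeneous characterisation to hold in the form asserted. The delicate points are: (a) establishing that the Biggins martingale $\aW^{(\alpha)}_n = \sum_{|v|=n}L(v)^{\alpha}$ converges to a non-degenerate limit $W>0$ on the survival set --- this is where \eqref{eq:A4a} (the $x\log x$-type integrability of $\sum T_i^{\alpha}$ together with $m'(\alpha)<0$, in the guise $\E\sum T_i^{\alpha}\log T_i \in (-\infty,0)$) or \eqref{eq:A4b} (finiteness of $N$ plus a moment $m(\theta)<\infty$ for some $\theta<\alpha$) enters, via the Biggins / Lyons change-of-measure argument \cite{Lyo1997}; and (b) showing that in the lattice case $r>1$ the only extra freedom is the choice of an $r$-periodic $h$ with completely monotone derivative, and that $h(L(v)t)$ interacts correctly with the product over $|v|=n$ --- this rests on the observation that $L(v) \in \G(T) = r^{\Z}$ so that $h(L(v)t) = h(t)$ exactly when $h$ is multiplicatively $r$-periodic, making the disintegrated equation $\prod_{|v|=n}\exp(-h(t)(L(v)t)^{\alpha}[W]_v) = \exp(-h(t)t^{\alpha}\sum_{|v|=n}L(v)^{\alpha}[W]_v) = \exp(-h(t)t^{\alpha}W)$ hold on the nose. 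Both (a) and (b) are exactly the content of the cited homogeneous theory, so the real work of the proof is bookkeeping: checking that \eqref{eq:A1}--\eqref{eq:A4} translate into the hypotheses under which that theory applies, and then invoking the disintegration correspondence.
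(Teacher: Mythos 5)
Your proposal follows exactly the paper's own (one-line) proof: reduce to the homogeneous equation via the disintegration correspondence of Theorem~\ref{Thm:M=exp(-W*)xM_{hom}} together with Lemma~\ref{Lem:Disintegration}, then invoke the characterization of $\Fsum(0,T)$ and of its disintegrations from \cite{ABM2010}. The only slip is your identification of $W$ with the limit of the additive martingale $\sum_{|v|=n}L(v)^{\alpha}$, which is right under \eqref{eq:A4a} (where $m'(\alpha)<0$) but not in the boundary case $m'(\alpha)=0$ permitted under \eqref{eq:A4b}, where the relevant $W$ is the derivative-martingale limit; this is immaterial here since the cited homogeneous theory supplies the correct $\bT$-measurable $W$ in all cases.
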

\begin{proof}
The theorem immediately follows from Lemma \ref{Lem:Disintegration} and Theorem \ref{Thm:M=exp(-W*)xM_{hom}} in combination with \cite[Theorem 8.3 and the proof of Corollary 2.3]{ABM2010}.
\end{proof}

The above result can be paraphrased as follows. Under mild conditions on $(C,T_1,T_2,\ldots)$ (including $\G(T)=\R_{>0}$ for convenience), there exists a non-negative random variable $W$ which is a function of $\bC \otimes \bT$ satisfying \eqref{eq:SumFP_hom} such that any solution $X$ to \eqref{eq:SumFP} has a representation of the form
\begin{equation}	\label{eq:representation_X}
X \stackrel{d}{=} W^* + hW^{1/\alpha} Y
\end{equation}
where $h \geq 0$ and $Y$ is independent of $\bC \otimes \bT$ with a one-sided stable law of index $\alpha$ (with Laplace transform $\exp(-t^{\alpha})$) if $\alpha \in (0,1)$, and $Y=1$ if $\alpha = 1$.

\section{Applications}	\label{sec:Applications}

In this section, we return to the examples from the introduction and discuss them in the context of the previously obtained results.

\begin{example}[Total population of a Galton-Watson process]	\label{Exa:total_population_(sub-)critical_GWP_2}
As in Example \ref{Exa:total_population_(sub-)critical_GWP}, let $(Z_n)_{n \geq 0}$ be a subcritical or non-trivial critical Galton-Wat\-son process with a single ancestor and total population size $X := \sum_{n \geq 0} Z_n$. Then $N = Z_1$ in the given situation.
If $\Prob(Z_1 \leq 1) = 1$, then we are in the situation of perpetuities and uniqueness of the solution to \eqref{eq:total_population_(sub-)critical_GWP} follows from Remark 2.4 in \cite{GM2000}, whereas in the case $\Prob(Z_1 > 1) > 0$ it follows from Proposition \ref{Prop:EN_leq_1}(a).
\end{example}

\begin{example}[Busy period in the M/G/1 queue]	\label{Exa:M/G/1_2}
In the situation of Example \ref{Exa:M/G/1_2}, the same arguments as in Example \ref{Exa:total_population_(sub-)critical_GWP_2} show that Eq.\ \eqref{eq:M/G/1} uniquely characterizes the distribution of the duration $X$ of the busy period in the M/G/1 queue. This result is not new, see \textit{e.g.}\ \cite[Example XIII.4(a)]{Fel1971}. The distribution of the busy period can even be explicitly calculated from \eqref{eq:M/G/1}, see \cite[pp.\;61--62]{Tak1962}.
\end{example}

\begin{example}[PageRank]	\label{Exa:PageRank2}
References \cite{VL2010} and \cite{JO2010b} are concerned with the PageRank tail behaviour. In our notation this means the tail behaviour of the random variable $W^*$ defined in \eqref{eq:W*}. In both of these papers the random variable $W^*$ is shown to be the unique solution to \eqref{eq:SumFP} subject to some additional (moment) constraints, see \cite[Theorem 3.1]{VL2010} and \cite[Lemma 4.5]{JO2010b}. Our Theorem \ref{Thm:characterization_of_Fsum} shows that in the situation of the cited results, there can be further solutions that do not satisfy the corresponding moment conditions and exhibit a different, in fact heavier tail behaviour. For instance, suppose that \eqref{eq:A1}, \eqref{eq:A2}, \eqref{eq:A3+}, and \eqref{eq:A4} are satisfied with characteristic exponent $\alpha < 1$. Suppose further that, for some $\beta > 1$, $m(\beta) = 1$, $m'(\beta) \in (0,\infty)$ (where $m'(\beta)$ denotes left derivative of $m$ at $\beta$ if $\beta$ is the right endpoint of the set $\{m < \infty\}$) and $\E C^\beta < \infty$. If, moreover, $\E(\sum_{i \geq 1} T_i)^{\beta} < \infty$, then Theorem 4.6(a) (in the case $\G(T) = \R_{>0}$) applies and yields that $\Prob(W^* > t) \sim Ht^{-\beta}$ as $t \to \infty$.
On the other hand, by Theorem \ref{Thm:characterization_of_Fsum}, there are further solutions with representations of the form $W^* + hW^{1/\alpha} Y$, $h \geq 0$ as in \eqref{eq:representation_X}. Denote the Laplace transform of $W$ by $\varphi$. Then $1-\varphi(t)$ is regularly varying of index $1$ at $0$, see \cite[Theorem 3.1]{ABM2010}. We claim that
\begin{equation}	\label{eq:tails}
\Prob(W^* + hW^{1/\alpha} Y > t) \sim \frac{h^{\alpha}}{\Gamma(1-\alpha)}(1-\varphi(t^{-\alpha}))	\quad	\text{as } t \to 0.
\end{equation}
We will briefly indicate how this can be proved. Since $hW^{1/\alpha}Y$ has heavier tails than $W^*$, it is easy to see that only the asymptotic behaviour of $hW^{1/\alpha} Y$ matters. Moreover, since $1-\varphi(t^{-\alpha})$ is regularly varying of index $-\alpha$ at $\infty$, it suffices to show that
$\Prob(W^{1/\alpha} Y > t) \sim \Gamma(1-\alpha)^{-1}(1-\varphi(t^{-\alpha}))$. To this end, it is convenient to first determine the asymptotics for $W$ and $Y$. Since $D(t) := (1-\varphi(t))/t$ is slowly varying at $0$, Theorem XIII.5.2 in \cite{Fel1971} together with Eq.\ (XIII.5.21) in the same reference provide us with
\begin{equation}
\int_0^t \Prob(W>u) \, {\mathrm d}u \sim D(t^{-1})	\quad	\text{as } t \to \infty.
\end{equation}
Using an argument as in the proof of the lemma on p.\ 446 in \cite{Fel1971} (notice that the lemma itself does not apply since $\rho = 0$ in the notation of \cite{Fel1971}), one can see that $\Prob(W>t) = o(1-\varphi(t^{-1}))$ as $t \to \infty$. 
Further, from \cite[Theorem XIII.6.1]{Fel1971}, we infer that $\Prob(Y > t) \sim \Gamma(1-\alpha)^{-1}t^{-\alpha}$ as $t \to \infty$. Now putting things together, for any given $\varepsilon > 0$ and some sufficiently large $K = K(\varepsilon)$, we infer
\begin{eqnarray*}
\Prob(W^{1/\alpha} Y > t)
& = &
\E [\1_{\{W > 0\}} \Prob(Y > t W^{-1/\alpha}|W)]	\\
& \leq &
\frac{1+\varepsilon}{\Gamma(1-\alpha)} (t/K)^{-\alpha} \E W \1_{\{W \leq (t/K)^{\alpha}\}} + \Prob(W > (t/K)^{\alpha})	\\
& \sim &
\frac{1+\varepsilon}{\Gamma(1-\alpha)}(1-\varphi(t^{-\alpha}))	\quad	\text{as } t \to \infty.
\end{eqnarray*}
Letting $\varepsilon \to 0$ gives the upper bound, and the lower bound can be derived similarly.
\end{example}

\section{Concluding Remarks}	\label{sec:remarks}

Let us finish with a brief discussion of the necessary and sufficient conditions for the almost sure finiteness of $W^*$ presented in Sections \ref{sec:nec_conditions} and \ref{sec:suf_conditions}. As a necessary condition we obtained $\inf_{0 \leq \theta \leq 1} m(\theta) \leq 1$ in Theorem \ref{Thm:Liu_better}(b), while, in reverse direction, we showed in Theorem \ref{Thm:suf_conditions} that, if the function $m$ drops strictly below $1$ in the interval $[0,1]$ and $C$ satisfies a mild moment condition, then $W^*<\infty$ almost surely. This raises the question of what happens in the case $\inf_{0 \leq \theta \leq 1} m(\theta) = 1$. We showed sufficiency of Condition (ii) in Theorem \ref{Thm:suf_conditions} in order to show that there are actually cases where $\inf_{0 \leq \theta \leq 1} m(\theta) = 1$ and still $W^* < \infty$ almost surely holds true. On the other hand, if \eqref{eq:A4} and \eqref{eq:A4a} hold with $\alpha=1$ and $C$  equals a positive constant almost surely, then
\begin{equation*}
W^*	=	C \sum_{n \geq 0} W_n^{(1)} = \infty
\quad	\text{almost surely on } S
\end{equation*}
owing to the fact that $W_n^{(1)} \to W^{(1)}$ almost surely and $W^{(1)}>0$ almost surely on $S$, see \textit{e.g.}\ \cite{Lyo1997}. This indicates that the situation is more involved when $\inf_{0 \leq \theta \leq 1} m(\theta) = 1$.

\section*{Acknowledgements}

The authors are grateful to Takis Konstantopoulos for pointing out a reference, and to Ralph Neininger for pointing out an error in an earlier version of this paper.

\bibliographystyle{abbrv}
\bibliography{Fixed_points}

\begin{thebibliography}{10}

\bibitem{ABM2010}
G.~Alsmeyer, J.~Biggins, and M.~Meiners.
\newblock The functional equation of the smoothing transform, 2010.
\newblock To appear in Ann. Probab. {P}reprint available at {\ttfamily
  www.arXiv.org}: 0906.3133v2 [math.PR].

\bibitem{AM2009}
G.~Alsmeyer and M.~Meiners.
\newblock A min-type stochastic fixed-point equation related to the smoothing
  transformation.
\newblock {\em Theor. Stoch. Proc.}, 15(31):19--41, 2009.

\bibitem{AR2005}
G.~Alsmeyer and U.~R{\"o}sler.
\newblock A stochastic fixed point equation related to weighted branching with
  deterministic weights.
\newblock {\em Electron. J. Probab.}, 11:no. 2, 27--56 (electronic), 2005.

\bibitem{AN1972}
K.~B. Athreya and P.~E. Ney.
\newblock {\em Branching processes}.
\newblock Springer-Verlag, New York, 1972.
\newblock Die Grundlehren der mathematischen Wissenschaften, Band 196.

\bibitem{Big1998}
J.~Biggins.
\newblock Lindley-type equations in the branching random walk.
\newblock {\em Stoch. Proc. Appl.}, 75:105--133, 1998.

\bibitem{Big1977}
J.~D. Biggins.
\newblock Martingale convergence in the branching random walk.
\newblock {\em J. Appl. Probab.}, 14(1):25--37, 1977.

\bibitem{BK1997}
J.~D. Biggins and A.~E. Kyprianou.
\newblock {S}eneta-{H}eyde norming in the branching random walk.
\newblock {\em Ann. Probab.}, 25(1):337--360, 1997.

\bibitem{DL1983}
R.~Durrett and T.~M. Liggett.
\newblock Fixed points of the smoothing transformation.
\newblock {\em Z. Wahrsch. Verw. Gebiete}, 64(3):275--301, 1983.

\bibitem{Fel1971}
W.~Feller.
\newblock {\em An Introduction to Probability Theory and Its Applications.
  {V}ol. {II}.}
\newblock Second edition. John Wiley \& Sons Inc., New York, 1971.

\bibitem{GM2000}
C.~M. Goldie and R.~A. Maller.
\newblock Stability of perpetuities.
\newblock {\em Ann. Probab.}, 28(3):1195--1218, 2000.

\bibitem{HS2009}
Y.~Hu and Z.~Shi.
\newblock Minimal position and critical martingale convergence in branching
  random walks, and directed polymers on disordered trees.
\newblock {\em Ann. Probab.}, 37:742--789, 2009.

\bibitem{JO2010b}
P.~Jelenkovi\'c and M.~Olvera-Cravioto.
\newblock Implicit renewal theory and power tails on trees, 2010.
\newblock {\texttt www.arxiv.org}: 1006.3295v2.

\bibitem{JO2010}
P.~Jelenkovi\'c and M.~Olvera-Cravioto.
\newblock Information ranking and power laws on trees.
\newblock {\em Adv. Appl. Probab.}, 42:1057--1093, 2010.

\bibitem{Liu1998}
Q.~Liu.
\newblock Fixed points of a generalized smoothing transformation and
  applications to the branching random walk.
\newblock {\em Advances in Applied Probability}, 30(1):85--112, 1998.

\bibitem{Lyo1997}
R.~Lyons.
\newblock A simple path to {B}iggins' martingale convergence for branching
  random walk.
\newblock 84:217--221, 1997.

\bibitem{Ner1981}
O.~Nerman.
\newblock On the convergence of supercritical general ({C}-{M}-{J}) branching
  processes.
\newblock {\em Z. Wahrsch. Verw. Gebiete}, 57(3):365--395, 1981.

\bibitem{PB1998}
L.~Page and S.~Brin.
\newblock The anatomy of a large-scale hypertextual web search engine.
\newblock In {\em Proceedings of the Seventh International Web Conference (WWW
  98)}, Brisbane, 1998.

\bibitem{Spi2010}
J.~Spitzmann.
\newblock {\em {L}\"osungen inhomogener stochastischer {F}ixpunktgleichungen
  ({S}olutions of inhomogeneous stochastic fixed-point equations)}.
\newblock PhD thesis, Christian-Albrechts-Universit{\"a}t zu Kiel, 2010.

\bibitem{Tak1962}
L.~Tak{\'a}cs.
\newblock {\em Introduction to the theory of queues}.
\newblock University Texts in the Mathematical Sciences. Oxford University
  Press, New York, 1962.

\bibitem{VL2010}
Y.~Volkovich and N.~Litvak.
\newblock Asymptotic analysis for personalized web search.
\newblock {\em Adv. Appl. Probab.}, 42:577--604, 2010.

\end{thebibliography}

\end{document}